\newtheorem{theorem}{Theorem}
\newtheorem{definition}{Definition}
\newtheorem{proposition}{Proposition}
\newtheorem{lemma}{Lemma}
\newtheorem{fact}{Fact}
\newtheorem{corollary}{Corollary}
\newtheorem*{conjecture}{Conjecture}
\newcommand{\E}{{\mathds{E}}}
\newcommand{\prob}{\mathds{P}}
\newcommand{\I}{{I}}
\newcommand{\corch}[1]{\left[ #1 \right]} 
\newcommand{\pare}[1]{\left( #1 \right)} 
\newcommand{\size}[1]{\left| #1 \right|} 
\newcommand{\abs}[1]{\left| #1 \right|} 
\newcommand{\set}[1]{{\left\{ #1 \right\}}} 
\newcommand{\Po}[1]{ {\rm{Po} }( #1 )} 
\newcommand{\convdist}{\xrightarrow[]{(d)}}
\newcommand{\edge}[1]{ {\rm{edges} }  \pare{#1} } %
\def\X{\mathbb{X}}
\def\Q{\mathbb{Q}}
\def\R{\mathbb{R}}
\def\N{\mathbb{N}}
\def\mx{\mu_{\X}}
\def\l{\lambda}
\def\e{\varepsilon}
\def\ind{\mathbbm{1}}
\title{Poisson generic sequences}
\author{
\begin{tabular}{ccc}
Nicolás Álvarez &
        Verónica Becher &
        Martín Mereb
\end{tabular}
}
\date{\today}
\begin{document}

\maketitle

\begin{abstract}
    Years ago, Zeev Rudnick defined the  Poisson generic real numbers 
by counting the number of occurrences of long  blocks 
of digits in the initial segments of the expansions of the real numbers in a fixed integer base.
Peres and Weiss proved that  almost all real numbers, 
with respect to the Lebesgue measure,
 are Poisson generic, but they did not publish their proof.
In this note, we first transcribe Peres and Weiss' proof and  then
 we show that there are computable Poisson generic instances 
and that  all Martin-L\"of random real numbers are Poisson generic.
\end{abstract}

\section{Introduction and statement of results}

Years ago Zeev Rudnick  defined the {\em Poisson generic} real numbers motivated by his result  in~\cite{Rudnick}  that in almost all dilates of lacunary sequences the number of elements in a random interval of the size of the mean spacing  follows the Poisson law.
By considering a variation on this,
Rudnick defined the notion of Poisson genericity for real numbers 
by counting   the number of occurrences of long  blocks 
of digits in the initial segments of the fractional expansions of the real numbers in a fixed integer base\footnote{He called  the notion  {\em supernormality}. Personal communication from Z. Rudnick to V. Becher, 24 May 2017.}.

Since Rudnick's definition considers just a single integer base, it boils down to  counting occurrences of blocks of symbols in initial segments of infinite sequences of symbols in a given finite alphabet.
Let  $\Omega$ be an alphabet of $b$ symbols, for $b\geq 2$. For each positive integer $k$, let  $\Omega^k$ be the set of words of length $k$ over alphabet~$\Omega$
and let  $\Omega^{\mathbb N}$ be the  set of infinite sequences of symbols in this given alphabet. 
For each~$k$, the initial segment of length~$N+k-1$
of an element in $\Omega^\N$ can be seen as~$N$  almost independent events of words of length~$k$,
each one with equal probability~$p= b^{-k}$.
The expected proportion of the $b^k$ many words  that occur exactly~$i$ times, for each $i=0,1,\ldots$, is 
\[
\binom{N}{i} p^i  (1-p)^{N-i}.
\]
  The Poisson distribution arises as a limit of the binomial distributions as follows, see also~\cite[Page 1]{last}.
  When ${N}p$ is a fixed constant~$\lambda$, for  $i=0,1,\ldots$,
  \begin{align*}
 \lim_{\substack{N\to\infty\\ \lambda=Np}}  \binom{N}{i}  p^i (1-p)^{N-i} 
  & = \lim_{N\to\infty}     \frac{N(N-1) \cdots (N-i+1)}{N^i}
    \left(1- p\right)^{N} \frac{\lambda^i}{i!}
 = e^{-\lambda} \frac{\lambda^i}{i!}
\end{align*}
  
We number the positions in words and  infinite sequences starting from~$1$ and  we write $\omega[l,r]$ for the subsequence of~$\omega$  that begins in position~$l$ and ends in position~$r$. We use interval notation, with a square bracket  when the set of integers includes the endpoint and a parenthesis to indicate that the endpoint is  not included.
For a word~$\omega$ we denote its length as~$|\omega|$. 

For  $j\in\N$, $x\in \Omega^{\N}$,  $k\in\N$ and $\omega \in \Omega^{k}$,   we  write $I_j(x,\omega)$ for  the indicator function  that the word $\omega$ occurs in the sequence  $x$ at position~$j$,

\[
I_j(x,\omega)= \ind_{ \{x[j, j+k) = \omega\} }.
\]
For~a positive real $\lambda$, $x\in\Omega^{\mathbb N}$,  
 $k\in\mathbb N$ and $i\in\mathbb N_0$  we write $Z^\lambda_{i,k}(x)$ for the proportion of words of length~$k$ that occur exactly~$i$ times in $x[1,\lfloor\lambda b^k\rfloor+k)$,

\[\displaystyle
  Z^\lambda_{i,k}(x) = 
  \frac{1}{b^k}
  \#\left\{\omega \in \Omega^k:\sum_{1\leq j\leq  \lambda b^k} I_j(x,\omega) =i \right\}.
\]  

\begin{definition}[Zeev Rudnick]\label{def:Poisson}
Let $\lambda$ be a positive real number. An element $x\in\Omega^{\mathbb N}$ is $\lambda$-Poisson generic if  for every  $i\in \mathbb{N}_0$,

\[
\lim_{k\rightarrow\infty}   Z^\lambda_{i,k}(x)
= e^{-\lambda} \frac{\lambda^i}{i!}.
\]
 An element $x\in\Omega^{\mathbb N}$ is  Poisson generic if it is  $\lambda$-Poisson generic
 for all positive real numbers~$\lambda$.
\end{definition}

Yuval Peres and Benjamin Weiss~\cite{weiss2020} strengthened the definition of Poisson genericity 
by considering  {\em all} sets of positions definable from Borel sets  instead of just sets of positions given by initial segments.\footnote{Talk by Benjamin Weiss entitled  ``Random-like behavior in deterministic systems'',
at Institute for Advanced Study Princeton University USA, June 16 2010.} 

We regard $\Omega$ as a finite probability space with uniform  measure that we denote $\mu$.
For each $x\in \Omega^\N$
and for each $k\in \mathbb N$, on  the product space  $\Omega^k$ with product measure $\mu^k$,
define the integer-valued random measure  
$M^x_k =M_k^{x}(\omega)$ on the  real half-line $\R^+=[0,+\infty)$ by setting for all Borel sets $S\subseteq \R^+$,
\[
M_k^{x}(S)(\omega) = \sum_{j \in \N \cap b^k S} I_j(x,\omega),
\]
where 
$ \N\cap b^k S$  denotes the set of integer values in  
$\{ b^k s: s\in S\}$.

A  point process $Y(\cdot)$ on  $\R^+$ is an integer-valued random measure.
Therefore, $M_k^{x}(\cdot)$ is a point process on  $\R^+$ for each $k\geq 1.$
A Poisson point process on~$\R^+$ 
is a point process $Y(\cdot)$ on $\R^+$  such that the following two conditions hold:
(a) 
for all disjoint Borel sets $S_1, \ldots, S_m$ included in $\R^+$, the   random variables $Y(S_1),\ldots , Y(S_m)$ are mutually independent; and (b)  for each bounded Borel set $S\subseteq \R^+$, $Y(S)$  has the distribution of a  Poisson random variable with parameter equal to the Lebesgue measure of~$S$.
A sequence $\pare{Y_k(\cdot) }_{k\geq 1}$ of point processes converges in distribution to a point process $Y(\cdot)$  if, for every Borel set $S$, the random variables $Y_k(S)$ converge in distribution to $Y(S)$ as $k$ goes to infinity.
     A thorough presentation  on Poisson point processes can be read from~\cite{kingman} or~\cite{last}.


We write $\mu^\N$ for the product  measure on $\Omega^\N$.

\begin{theorem}[Peres and Weiss \cite{weiss2020}]\label{thm:weiss}
For almost all  $x\in \Omega^{\mathbb N}$ with respect to 
the product measure~$\mu^\N,$
the point processes $M_k^{x}(\cdot)$ converge in distribution to a 
Poisson point process on  $\R^+$ as $k$ goes to infinity.\end{theorem}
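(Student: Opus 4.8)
The plan is to establish convergence in distribution of point processes via the standard criterion for convergence to a Poisson point process: it suffices to show that for every finite family of disjoint bounded intervals $S_1,\dots,S_m \subseteq \R^+$ with rational endpoints (these generate enough of the Borel structure), the joint law of $\pare{M_k^x(S_1),\dots,M_k^x(S_m)}$ converges to a vector of independent Poisson random variables with parameters the Lebesgue measures $|S_1|,\dots,|S_m|$. By a theorem on Poisson convergence (e.g. via factorial moments, or via the Chen--Stein method, or via the classical result of Kallenberg), it is enough to prove that for each fixed choice of disjoint intervals and each tuple of nonnegative integers, the corresponding joint factorial moments of $M_k^x(\cdot)$ converge to those of the product Poisson law, namely $\prod_\ell |S_\ell|^{i_\ell}$. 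A countable intersection over all rational-endpoint configurations then gives a single full-measure set of $x$ that works simultaneously for all Borel $S$.

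The first step is to compute, for a fixed Borel set $S$ (say a finite union of intervals), the expectation $\E_{\mu^k}\corch{M_k^x(S)}$ over the random word $\omega$ for a \emph{typical} $x$: since $I_j(x,\omega)$ has $\mu^k$-probability $b^{-k}$ of equalling $1$, we get $\E\corch{M_k^x(S)} = b^{-k}\#\pare{\N \cap b^k S}$, which tends to the Lebesgue measure $|S|$ as $k\to\infty$ — and crucially this holds for every $x$, not just almost every $x$. The subtlety is that the randomness in Rudnick/Peres--Weiss is carried by $\omega$, while $x$ is the \emph{fixed} sequence whose genericity we want; so we must show that for $\mu^\N$-almost all $x$, the empirical higher moments $\E_{\mu^k}\corch{M_k^x(S_1)^{(i_1)}\cdots M_k^x(S_m)^{(i_m)}}$ (falling factorials) converge to $\prod_\ell |S_\ell|^{i_\ell}$. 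Expanding the falling-factorial product gives a sum over tuples of \emph{distinct} positions $j_1,\dots,j_r$ (one block of positions per interval), and $\E_{\mu^k}\corch{\prod I_{j_t}(x,\omega)}$ equals $b^{-k}$ to the power of the number of \emph{distinct $k$-windows} $x[j_t, j_t+k)$ among the chosen positions. For positions that are far apart (window separation $\geq k$) these windows are automatically distinct, contributing the main term; one must bound the error from near-diagonal tuples where some windows overlap and may coincide.

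The main obstacle — and the part requiring real work — is controlling those near-diagonal and overlap contributions for a \emph{fixed} $x$ rather than in expectation over $x$. This is where the hypothesis "almost all $x$" enters: one bounds, via a first/second-moment (Borel--Cantelli) argument over $x \sim \mu^\N$, the number of pairs of nearby positions $j, j'$ (within distance $k$) at which $x$ has $x[j,j+k) = x[j',j'+k)$ — for a random $x$ such self-overlapping coincidences are exponentially rare, so for almost every $x$ their count is negligible compared to $b^k$. Simultaneously one must handle, again by a Borel--Cantelli / concentration argument over $x$, the fluctuation of the "far apart" main term around its mean: showing $b^{-k}$ times the number of good distinct-window tuples concentrates at $\prod |S_\ell|^{i_\ell}$. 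Assembling these: fix rational intervals and integer exponents, prove almost-sure convergence of the joint factorial moments, intersect over the countably many configurations, and invoke the moment criterion for Poisson point process convergence to conclude. I expect the delicate accounting to be the overlap/near-diagonal estimate — ensuring the error terms are $o(1)$ uniformly enough to survive the $k\to\infty$ limit for almost every $x$.
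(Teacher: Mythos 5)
There is a genuine gap, and it sits at the center of your computation. In the quenched factorial moment $\E_{\mu^k}\corch{\prod_t I_{j_t}(x,\omega)}$ the expectation is over the \emph{single} random word $\omega\in\Omega^k$ shared by all the indicators, so the product equals $1$ only when $\omega$ coincides simultaneously with every window $x[j_t,j_t+k)$; hence the expectation is $b^{-k}$ when all these windows are equal as words, and $0$ otherwise. It is not ``$b^{-k}$ to the power of the number of distinct windows'' --- that formula would be appropriate if each position carried its own independent random word (or in an annealed computation where $x$ is also averaged), not here. Two consequences break your plan. First, far-apart positions do not have ``automatically distinct'' windows: a fixed $x$ repeats each length-$k$ word many times, and those repetitions are precisely what Poisson genericity counts. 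Second, the roles of main term and error term are inverted: tuples whose windows are not all equal contribute zero, so the whole quenched factorial moment equals $b^{-k}$ times the number of tuples of positions (with the prescribed multiplicities in each $\N\cap b^kS_\ell$) whose $k$-windows in $x$ all coincide. What you must prove, for almost every $x$ and every order, is that this coincidence count concentrates at $b^k\prod_\ell\abs{S_\ell}^{i_\ell}$ --- exactly the quantity your sketch treats as an ``exponentially rare'' error (that is true only for overlapping, nearby coincidences, not for distant ones). As written, your ``far-apart main term'' is either deterministic (under your formula it does not depend on $x$, so there is nothing to concentrate) or zero (under the correct formula), so the proposed Borel--Cantelli/concentration step has no correct target. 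A factorial-moment route could probably be repaired (second-moment estimates over $x$ for the coincidence counts at each order, a first-moment bound for tuples with overlapping windows, then Borel--Cantelli in $k$), but that argument is not in your proposal.

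For comparison, the paper takes a lighter path that avoids higher moments entirely: by Kallenberg's criterion it suffices to control, for finite unions of rational intervals $S$, the expectations $\E\corch{M^x_k(S)}$ and the void probabilities; the annealed law of $M_k(S)$ is compared to a Poisson law in total variation via a dependency-graph (Chen--Stein type) bound, and the quenched statement is obtained by applying McDiarmid's bounded-differences inequality to $x\mapsto\mu^k\pare{M_k^{x}(S)=i}$, followed by Borel--Cantelli and a countable intersection over the rational configurations.
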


Peres and Weiss communicated the proof in~\cite{weiss2020} but they did not publish it. The first contribution in this note is a transcription of their proof.

The definition of Poisson genericity, Definition~\ref{def:Poisson},  uses the function~$Z_{i,k}^{\lambda}(x)$,  which can be formulated  in terms of 
$M^{x}_k(S)$ for the sets  $S=(0,\lambda]$, as follows:
\begin{align*}
Z_{i,k}^{\lambda}(x)
&=\mu^k\pare{\omega\in\Omega^k: M^{x}_k((0,\lambda])(\omega)=i}.
\end{align*}
This yields the following corollary of Theorem~\ref{thm:weiss}:

\begin{corollary}[Peres and Weiss \cite{weiss2020}]\label{thm:metric}
Almost all  elements in $\Omega^{\mathbb N}$,  with respect to 
the product measure~$\mu^\N,$
are Poisson generic.
\end{corollary}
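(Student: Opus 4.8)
The plan is to derive Corollary~\ref{thm:metric} from Theorem~\ref{thm:weiss} by a routine measure-theoretic argument, the only subtlety being that Poisson genericity requires simultaneous control over \emph{all} positive reals $\lambda$ and \emph{all} $i\in\N_0$, whereas convergence in distribution of $M_k^x(\cdot)$ gives, a priori, control of $Z_{i,k}^\lambda(x)$ for one fixed $\lambda$ and one fixed $i$ at a time. First I would fix $x$ in the full-measure set $G\subseteq\Omega^\N$ supplied by Theorem~\ref{thm:weiss}, so that $M_k^x(\cdot)\to Y(\cdot)$ in distribution for a Poisson point process $Y$. For each fixed $\lambda>0$, the set $(0,\lambda]$ is a bounded Borel set, so $M_k^x((0,\lambda])\convdist Y((0,\lambda])$, and $Y((0,\lambda])$ is Poisson with parameter $\l\big((0,\lambda]\big)=\lambda$ (Lebesgue measure). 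Since the limit is integer-valued, convergence in distribution is equivalent to pointwise convergence of the probability mass functions; using the identity from the excerpt,
\[
Z_{i,k}^{\lambda}(x)=\mu^k\pare{\omega\in\Omega^k: M^{x}_k((0,\lambda])(\omega)=i}=\prob\!\pare{M^{x}_k((0,\lambda])=i},
\]
this gives $\lim_{k\to\infty}Z_{i,k}^\lambda(x)=e^{-\lambda}\lambda^i/i!$ for every $i\in\N_0$. Hence every $x\in G$ is $\lambda$-Poisson generic for every fixed $\lambda$.

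The step that needs a little care is upgrading ``for each fixed $\lambda$, a.e.\ $x$'' to ``a.e.\ $x$, for all $\lambda$'': naively one would take a countable intersection over $\lambda$ in some dense set, but Poisson genericity demands \emph{all} real $\lambda$. Here the cleanest route is to observe that Theorem~\ref{thm:weiss} already delivers a \emph{single} full-measure set $G$ on which $M_k^x(\cdot)$ converges in distribution as a point process, i.e.\ simultaneously for all Borel test sets; so no countable intersection over $\lambda$ is needed at all --- for a single $x\in G$ the conclusion holds for every bounded Borel set and in particular for every $(0,\lambda]$. Thus $\mu^\N(G)=1$ and every element of $G$ is $\lambda$-Poisson generic for all $\lambda>0$ and all $i\in\N_0$, which is exactly the assertion that every $x\in G$ is Poisson generic.

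Finally I would record the reduction of convergence in distribution of $\N_0$-valued random variables to convergence of their point masses --- either by noting that the distribution function jumps only at integers, or simply because $\prob(M_k^x((0,\lambda])=i)=\prob(M_k^x((0,\lambda])\le i)-\prob(M_k^x((0,\lambda])\le i-1)$ and each cumulative term converges by portmanteau (the points $i-\tfrac12$, $i+\tfrac12$ being continuity points of the limit law). The main (and essentially only) obstacle is purely bookkeeping: making sure the quantifier order is legitimate, which is handled by invoking the point-process formulation of Theorem~\ref{thm:weiss} rather than its one-set-at-a-time consequence. Everything else is immediate from the displayed identity for $Z_{i,k}^\lambda$ together with the fact that a Poisson point process assigns to $(0,\lambda]$ a Poisson law of mean $\lambda$.
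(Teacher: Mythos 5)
Your proposal is correct and follows essentially the same route as the paper, which derives the corollary directly from the identity $Z_{i,k}^{\lambda}(x)=\mu^k\pare{\omega: M^{x}_k((0,\lambda])(\omega)=i}$ together with Theorem~\ref{thm:weiss}, whose single full-measure set already gives convergence $M_k^x(S)\convdist Y(S)$ for every Borel set $S$ (so, as you note, no countable intersection over $\lambda$ is needed). Your added remarks on $Y((0,\lambda])$ being Poisson of mean $\lambda$ and on passing from convergence in distribution of integer-valued variables to convergence of point masses are exactly the routine details the paper leaves implicit.
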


Peres and Weiss~\cite{weiss2020} also proved
that  for any fixed positive $\lambda$,  $\lambda$-Poisson genericity implies Borel normality and 
that  the two notions are not equivalent,  witnessed by the fact that Champernowne's  sequence is not $\lambda$-Poisson generic for $\lambda=1$. Their proof method was used in~\cite{Kamae2015,Kamae2018} for other  randomness notions.

The second contribution of this note is  an existence  proof of   {\em computable} Poisson generic elements in~$\Omega^\N$. 
The theory of computability defines the computable functions from  $\N$ to $\N$  and they correspond exactly  to the functions that  can be calculated by an algorithm.  
The notion of computability extends immediately to countable spaces (by fixing an enumeration) and to other objects and spaces, for a monograph on this see~\cite{W}.
An element $x\in \Omega^{\N}$ is  \emph{computable} if there is a computable function $f:\N\to \Omega$ such that  $f(n)$ is  the $n$-th symbol of~$x$. 
We show:

\begin{theorem}\label{thm:computable}
There are  countably many computable Poisson generic elements in $\Omega^\N$.
\end{theorem}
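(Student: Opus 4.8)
The plan is to construct a single computable Poisson generic sequence explicitly, and then observe that the construction leaves enough freedom (or can be relativized by prepending finite prefixes, or by choosing different computable "defaults") to produce countably many such sequences. The natural strategy is a diagonalization / finite-extension argument in the style of constructions of absolutely normal numbers (cf.\ the classical results of Sierpi\'nski, Turing, and their effectivizations): we build $x$ in stages, committing to longer and longer prefixes, and at stage $k$ we ensure that the relevant statistic $Z^\lambda_{i,k}(x)$ is close to $e^{-\lambda}\lambda^i/i!$ for the finitely many pairs $(\lambda,i)$ and the tolerance we are currently responsible for. Since Definition~\ref{def:Poisson} requires the limit over all positive real $\lambda$ and all $i\in\N_0$, but these quantities are continuous in $\lambda$ and the partial sums $\sum_{i} e^{-\lambda}\lambda^i/i!$ converge, it suffices to handle a countable dense set of $\lambda$'s (say rationals) together with a diagonal control on the rate, and then transfer to all $\lambda$ by monotonicity/continuity arguments already implicit in the passage from Theorem~\ref{thm:weiss} to Corollary~\ref{thm:metric}.

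The key steps, in order, are as follows. First, I would reduce the target condition to a countable list of requirements $R_{m}$: for a fixed enumeration of triples $(\lambda_t, i_t, \e_t)$ with $\lambda_t$ rational and $\e_t=2^{-t}$, requirement $R_m$ asks that for all sufficiently large $k$, $|Z^{\lambda_t}_{i_t,k}(x) - e^{-\lambda_t}\lambda_t^{i_t}/i_t!| < \e_t$. Second — and this is the crucial ingredient — I would invoke Theorem~\ref{thm:weiss} (equivalently Corollary~\ref{thm:metric}) in its effective content: the set of $x$ satisfying $R_m$ has $\mu^\N$-measure $1$, and moreover, for each fixed $k$, the quantity $Z^{\lambda}_{i,k}(x)$ depends only on the prefix $x[1,\lfloor \lambda b^k\rfloor + k)$, so it is a computable function of a finite prefix. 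Combining these, the set of "good" prefixes at each stage has density approaching $1$ inside $\Omega^*$, and this density fact is verifiable by a finite computation. Third, with this in hand, run the standard effective finite-extension construction: at stage $k$, having committed to a prefix $\sigma_k$, search computably for an extension $\sigma_{k+1}\succeq\sigma_k$ that keeps the measure of surviving continuations bounded below (a martingale/volume argument guarantees one exists) and simultaneously forces the finitely many active requirements to hold at level $k+1$. The limit $x=\bigcup_k \sigma_k$ is computable because the search at each stage is bounded and effective. Fourth, to get countably many such sequences, note the construction at each stage typically offers at least two admissible choices of $\sigma_{k+1}$ (the surviving set has density bounded away from both $0$ and $1$), or more simply: for any finite word $w$, the same construction relativized to start from prefix $w$ yields a computable Poisson generic sequence, and these are pairwise distinct, giving countably many.

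The main obstacle I anticipate is making the "measure-theoretic statement $\Rightarrow$ effective combinatorial density bound at each finite stage" transition rigorous with explicit, computable error bounds — i.e.\ extracting from the proof of Theorem~\ref{thm:weiss} a quantitative, uniform-in-$k$ estimate of the form $\mu^\N\{x : |Z^\lambda_{i,k}(x) - e^{-\lambda}\lambda^i/i!| \ge \e\} \le \delta(k,\e)$ with $\delta(k,\e)\to 0$ computably. The Peres–Weiss argument proceeds via method of moments / factorial moment convergence for the point processes $M^x_k$, and the relevant second-moment (variance) bounds there are in fact explicit: the $b^k$ indicator-type counts are "almost independent," with correlations controlled by overlaps of length-$k$ windows, so a Chebyshev estimate yields $\delta(k,\e) = O(\lambda\, b^{-k}\e^{-2})$ or similar. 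Once that explicit decay is isolated, the finite-extension construction and its computability are routine; the bookkeeping of which requirements are "active" at stage $k$ (only those with $2^{-t}$ not yet achieved and with $\lambda_t b^k$ within the committed range) must be arranged so that each requirement is eventually permanently satisfied, which is the usual priority-free argument since there is no injury — all requirements are simultaneously satisfiable on a measure-one set.
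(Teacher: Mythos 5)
Your proposal is correct in outline and follows the same overall strategy as the paper (an effective, Turing-style measure-theoretic construction), but it differs in the two key ingredients. Where you propose extracting a Chebyshev-type bound $\mu^{\mathbb N}\bigl(|Z^{\lambda}_{i,k}-e^{-\lambda}\lambda^i/i!|\ge\varepsilon\bigr)=O(\lambda\,\mathrm{poly}(k)\,b^{-k}\varepsilon^{-2})$ from second moments, the paper proves a Tail Bound (Lemma~\ref{lemma:tail}) by combining the dependency-graph Poisson approximation (for the convergence of the mean, with explicit rate $O((\lambda+1)k\,b^{-k})$ in total variation) with McDiarmid's bounded-differences inequality, which gives an exponential estimate $\exp(-2b^k/(\lambda k^4))$; and where you hand-roll a finite-extension construction with a requirement enumeration, the paper defines, for each $k$, a single bad open set $O_k$ that unions over \emph{all} $i\in\{0,\dots,b^k-1\}$ and all rationals $\lambda\in L_k$, checks that $(\mu^{\mathbb N}(O_k))_k$ is effectively summable, and then invokes the constructive Borel--Cantelli lemma of \cite{Rojas} (Lemma~\ref{lemma:rojas}), which directly yields a \emph{dense} sequence of uniformly computable points in the good set, hence countably many computable examples without any prefix-relativization argument. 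Your weaker polynomial bound does suffice for your scheduling, since you never take a union over the exponentially many values of $i$ at a fixed level $k$ (and note that your plan still needs the explicit rate for the mean, i.e.\ the annealed total-variation estimate, not only a variance bound; the variance bound itself follows, e.g., from Efron--Stein with the same bounded-differences constant $c=k\,b^{-k}$). What the paper's stronger exponential bound buys is precisely the ability to control all $i<b^k$ and all $\lambda\in L_k$ simultaneously at each level, which is what makes the same family of sets $O_k$ reusable as a Martin-L\"of test in the proof of Theorem~\ref{thm:random}; your per-requirement Chebyshev estimates would not support that second application. Finally, both arguments pass from rational to arbitrary real $\lambda$ the same way, via the uniform-in-$k$ Lipschitz-type bound $d_{TV}(M^x_k((0,\lambda']),M^x_k((0,\lambda]))\le\lambda'-\lambda+O(b^{-k})$ together with continuity of the Poisson law in $\lambda$; your appeal to ``continuity/monotonicity'' should be made quantitative in exactly this form, and your countability step needs the (easy, but worth stating) observation that prepending a finite word changes $Z^{\lambda}_{i,k}$ by at most $O((|w|+k)b^{-k})$ and so preserves Poisson genericity.
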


Theorem~\ref{thm:computable} is for Poisson genericity 
as the computable version of Sierpi\'nski's construction~\cite{BF}
or  Turing's algorithm~\cite{BFP,Turing:1937} is for Borel absolute normality (normality to all integer bases).
We follow the same strategy 
first used by Turing
but in the general  form presented in~\cite{Rojas}.
From Theorem~\ref{thm:computable} follows that there are Poisson generic sequences in every Turing degree. To see this, consider  a computable Poisson generic sequence~$x$ and any  given sequence~$y$, and construct a    sequence~$z$  by inserting in~$x$ the symbols of~$y$  at prescribed very widely spaced positions. The set of these   positions should  be computable and should have density zero.

Although almost all elements in $\Omega^\N$
are Poisson generic and there are computable instances,  no explicit example is known.
The recent  work~\cite{bs} gives a construction of explicit  $\lambda$-Poisson generic sequences in an alphabet with at least three symbols, for any positive fixed real number~$\lambda$.

After gathering  statistics on several sequences we arrived to the following.

\begin{conjecture}
The sequences obtained by concatenating the Fibonacci numbers (in any base), the Rudin--Shapiro along squares and the Thue--Morse, along squares, are $1$-Poisson generic.
\end{conjecture}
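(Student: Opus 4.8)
The plan is to attack all three families through a single reduction---\emph{the method of factorial moments}---and then to supply the sequence-specific input. Fix the target sequence $x\in\Omega^{\N}$ and $\lambda=1$. For uniform $\omega\in\Omega^k$ let $C_k(\omega)=\sum_{1\le j\le b^k}I_j(x,\omega)$ be the number of occurrences of $\omega$ in the prefix $x[1,\floor{b^k}+k)$, so that $Z^1_{i,k}(x)=\mu^k\pare{\omega: C_k(\omega)=i}$. Since the Poisson law is determined by its moments and all factorial moments of $\Po{1}$ equal~$1$, being $1$-Poisson generic is equivalent to the convergence
\[
\E_{\mu^k}\!\left[C_k(C_k-1)\cdots(C_k-r+1)\right]\xrightarrow[k\to\infty]{}1,
\qquad r=1,2,\ldots
\]

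A direct expansion shows this factorial moment equals $b^{-k}A_r(k)$, where
\[
A_r(k)=\#\set{(j_1,\dots,j_r)\ \text{distinct},\ \le b^k:\ x[j_1,j_1+k)=\cdots=x[j_r,j_r+k)}
\]
counts ordered $r$-tuples of distinct positions in the prefix carrying one common block of length~$k$: once the $r$ windows coincide as a condition on $x$ alone, the $\mu^k$-probability that $\omega$ equals their common value is $b^{-k}$, and otherwise the contribution is $0$. Thus the whole conjecture reduces, for each sequence and each $r$, to the combinatorial estimate $A_r(k)=\pare{1+o(1)}\,b^k$ as $k\to\infty$. The case $r=1$ is trivial; the content is the control of block self-coincidences for $r\ge2$, which is exactly the statistic that separates Poisson genericity from normality---it is $A_2(k)/b^k\to1$ that Champernowne's sequence violates.

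The next step is to install, for each family, the correlation estimates that force $A_r(k)\sim b^k$. For the Thue--Morse-along-squares sequence $(t_{n^2})$ and the Rudin--Shapiro-along-squares sequence $(r_{n^2})$ I would build on the exponential-sum machinery of Mauduit and Rivat: the number of positions in $[1,b^k]$ carrying a prescribed block pattern is an additive-character sum to which one applies van der Corput and Weyl differencing to absorb the quadratic sampling $n\mapsto n^2$, combined with the digit-level (Gowers-type) cancellation that these two sequences enjoy; summing the resulting power-saving bounds over the $r$-point pattern constraints should yield the main term $b^k$ with negligible error. For the Fibonacci concatenation $x=\mathrm{digits}_b(F_1)\,\mathrm{digits}_b(F_2)\cdots$, the prefix of length $b^k$ contains $M\sim c\,b^{k/2}$ of the $F_n$, the longest of which have $\sim b^{k/2}\gg k$ base-$b$ digits; I would split $A_r(k)$ into a diagonal part (all $r$ positions inside one $F_n$) and an off-diagonal part (positions in different numbers, or straddling a seam), estimate the diagonal part from the distribution of length-$k$ blocks in the expansions of individual Fibonacci numbers via $F_n=\lfloor \phi^n/\sqrt5\rceil$ and the equidistribution of the associated $\{\phi^n\alpha\}$, and show the seam and cross-number terms contribute only $o(b^k)$.

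\emph{The main obstacle} is uniformity across coupled scales. All three analyses must deliver joint $r$-point block correlations for blocks of length~$k$ inside a window of length exactly~$b^k$, uniformly as $k\to\infty$ and for every fixed~$r$---the critical regime in which the block length equals the logarithm of the window length. Existing results for $(t_{n^2})$ and $(r_{n^2})$ furnish essentially first-order (single-block, normality-type) cancellation with power savings, but here the saving must dominate the exponentially many, about $b^{k}$, competing patterns and survive differencing to all orders~$r$; producing such uniform high-order estimates, rather than any single exponential-sum bound, is the genuine difficulty. For the Fibonacci sequence the parallel difficulty is ruling out the excess coincidences created by the deterministic, self-similar structure of the digit strings and their seams, which is precisely the mechanism by which a normal concatenation sequence can fail to be Poisson generic.
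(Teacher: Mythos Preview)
The paper does \emph{not} prove this statement: it is explicitly labelled a conjecture, introduced only with the sentence ``After gathering statistics on several sequences we arrived to the following.'' There is no proof to compare against.

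Your reduction via factorial moments is correct. Writing $C_k(\omega)=\sum_{j\le b^k}I_j(x,\omega)$ as a random variable on $(\Omega^k,\mu^k)$, the identity
\[
\E_{\mu^k}\bigl[C_k(C_k-1)\cdots(C_k-r+1)\bigr]=b^{-k}A_r(k)
\]
is exact, and since the Poisson law is determined by its moments, $A_r(k)\sim b^k$ for every fixed $r$ is indeed equivalent to $1$-Poisson genericity. That is a clean and useful reformulation.

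However, what follows is a plan, not a proof, and you say so yourself. The ``main obstacle'' paragraph accurately identifies why the conjecture is open: for the Thue--Morse and Rudin--Shapiro sequences along squares, the Mauduit--Rivat machinery yields normality (essentially $A_1$ and, with work, $A_2$ to the right order), but there is no existing result giving the uniform $r$-point correlations you need at the critical scale where the block length equals the base-$b$ logarithm of the window length. For the Fibonacci concatenation the situation is worse: the digits of individual $F_n$ are not known to be well distributed in any quantitative sense (equidistribution of $\phi^n\alpha$ modulo $1$ gives you the leading digits, not the full block statistics), and the seam contributions are exactly the mechanism by which Champernowne-type concatenations fail Poisson genericity, so ``show the seam terms contribute only $o(b^k)$'' is the whole problem rather than a routine step.

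In short: your framework is sound and the obstacles you name are the real ones, but nothing here closes the gap. The statement remains, as in the paper, a conjecture.
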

The automatic sequences  Rudin--Shapiro and Thue--Morse along squares are known to be Borel  normal~\cite{MR,Mullner17}.

The  last result of this note relates  Poisson genericity  with  the notion of  randomness given by the theory of computability  called {\em Martin-L\"of randomness}. A thorough presentation of this notion can be read from~\cite{nies}.

Assume the alphabet $\Omega$  has  $b$ symbols, $b\geq 2$. 
We write $\Omega^{<\N}$ for the set of all finite words~$\bigcup_{k\geq 1} \Omega^k$.
In  the space $\Omega^\N$ with the product measure~$\mu^\N$
consider the basic open sets $B_\omega=\{\omega z: z\in \Omega^{\N}\}$, for each  $\omega\in \Omega^{<\N}$. Then, $\mu^\N(B_\omega)=b^{-|\omega|}$.
A set $O\subseteq \Omega^\N$ is computably  open if $O= \bigcup_{i\geq 1}{B_{f(i)}}$  for some computable function $f:\N\to\Omega^{<N}$.
A sequence $(O_n)_{n\geq 1}$  of open sets  is uniformly computable if there is a  computable function $f:\N\times\N\to \Omega^{<\mathbb N}$ such that 
for each $n\in\N$, $O_n=\bigcup_{i\geq 1}  B_{f(n,i)}$.

A {\em Martin-L\"of test} is a uniformly computable sequence  $(O_{n})_{n\geq 1}$
of  open sets whose measure is computably bounded and goes to $0$ as $n$ goes to infinity.
A~sequence~$x\in\Omega^\mathbb N$ is 
{\em Martin-L\"of random} if, for every 
Martin-L\"of test $(O_{n})_{n\geq 1}$, the sequence
$x$  is not in $\cap_{n\geq 1} O_n$.
Since there are only countably many tests
 it follows that almost all  elements in $\Omega^\N$  are  Martin-L\"of random.

An equivalent formulation says that 
$x\in\Omega^\mathbb N$  is Martin-L\"of random if $x$ is the base-$b$ expansion of a real number $y$ such that  the sequence $(b^n y)_{n\geq 1}$ is uniformly distributed modulo one for all  computably open sets, not just for intervals~\cite{BG}.  Since changing the base representation is achievable by a computable function,  this formulation of Martin-L\"of randomness can be stated requiring that the sequence    $(c^n y)_{n\geq 1}$ be uniformly distributed modulo one for computably open sets, with {\em any}  integer $c\geq 2$.
Here we prove:

\begin{theorem}\label{thm:random}
All Martin-L\"of random elements in $\Omega^\N$  are Poisson generic.
\end{theorem}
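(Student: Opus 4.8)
The plan is to show that the failure of Poisson genericity is a $\mu^\N$-null event that is, moreover, $\Sigma^0_2$ in a uniform and effective way, so that it can be covered by a Martin-L\"of test; since a Martin-L\"of random $x$ avoids every such test, it must be Poisson generic. More precisely, fix a positive rational $\lambda$ and $i\in\N_0$. By Corollary~\ref{thm:metric} (equivalently, by Theorem~\ref{thm:weiss} read through the identity $Z^\lambda_{i,k}(x)=\mu^k(\omega:M^x_k((0,\lambda])(\omega)=i)$), for $\mu^\N$-almost every $x$ one has $Z^\lambda_{i,k}(x)\to e^{-\lambda}\lambda^i/i!$ as $k\to\infty$. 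The set of $x$ for which this convergence fails for some rational $\lambda$ and some $i$ is a countable union of null sets, hence null; the key point is to exhibit, for each $(\lambda,i)$ and each rational tolerance $\e>0$, a uniformly computable sequence of open sets of computably bounded, vanishing measure whose intersection contains every $x$ with $\limsup_k |Z^\lambda_{i,k}(x)-e^{-\lambda}\lambda^i/i!|>\e$.

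The first step is to make the quantity $Z^\lambda_{i,k}(x)$ algorithmically tractable: for fixed $k$, the value $Z^\lambda_{i,k}(x)$ depends only on the finite prefix $x[1,\lfloor\lambda b^k\rfloor+k)$, it is a rational number with denominator $b^k$, and it is computed from that prefix by an explicit algorithm (count, for each of the $b^k$ words $\omega$, how many of the $\lfloor\lambda b^k\rfloor$ positions it occupies, then count how many words have count exactly $i$). Consequently, for fixed rationals $\lambda,\e$ and fixed $i,k$, the set $U_{\lambda,i,\e,k}=\{x: |Z^\lambda_{i,k}(x)-e^{-\lambda}\lambda^i/i!|>\e\}$ is a clopen set, equal to a finite union of basic cylinders $B_\omega$ with $|\omega|=\lfloor\lambda b^k\rfloor+k$, and this finite list of cylinders is produced by an algorithm taking $(\lambda,i,\e,k)$ as input; its measure is an exactly computable rational. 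The second step is a quantitative reading of Theorem~\ref{thm:weiss}: because the convergence $Z^\lambda_{i,k}\to e^{-\lambda}\lambda^i/i!$ holds almost surely, the tail event $\bigcap_{K\ge 1}\bigcup_{k\ge K}U_{\lambda,i,\e,k}$ has measure zero, so $\mu^\N\big(\bigcup_{k\ge K}U_{\lambda,i,\e,k}\big)\to 0$ as $K\to\infty$; fixing a computable enumeration of all triples $(\lambda,i,\e)$ with $\lambda,\e\in\Q^{+}$ and $i\in\N_0$, and using $\sigma$-additivity, one assembles a single uniformly computable sequence $(O_n)_{n\ge1}$ with $O_n=\bigcup_{m}\bigcup_{k\ge k_n(m)}U_{\lambda_m,i_m,\e_m,k}$, where the truncation levels $k_n(m)$ are chosen (this is where one must be careful) so that $\mu^\N(O_n)\le 2^{-n}$, which is a computable bound going to $0$.

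The third and final step is to verify that $(O_n)_{n\ge1}$ is a genuine Martin-L\"of test — it is a uniformly computable sequence of computably open sets with computably bounded measure tending to $0$ — and that $\bigcap_n O_n$ contains every $x$ that is not Poisson generic: if $x$ fails $\lambda$-Poisson genericity for some rational $\lambda$ (rationals suffice by a standard monotonicity/density argument on $\lambda\mapsto Z^\lambda_{i,k}$), then there is a rational $\e>0$ and an $i$ with $|Z^\lambda_{i,k}(x)-e^{-\lambda}\lambda^i/i!|>\e$ for infinitely many $k$, hence $x\in U_{\lambda,i,\e,k}$ for infinitely many $k$, hence $x\in O_n$ for all $n$. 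Therefore a Martin-L\"of random $x$ lies outside $\bigcap_n O_n$ and is Poisson generic. I expect the main obstacle to be the second step: extracting from Theorem~\ref{thm:weiss} not just a null set but the \emph{effective} rate statement $\mu^\N(\bigcup_{k\ge K}U_{\lambda,i,\e,k})\to0$ together with a computable way to choose the truncation levels $k_n(m)$ realizing the $2^{-n}$ bound. One clean route is to note that these tail-event measures are themselves computable reals (each $\mu^\N(U_{\lambda,i,\e,k})$ is an exactly computable rational, and the tail sum is approximable from above once one has a convergence rate), so it is enough to establish any computable upper bound on the rate; a second route, if a direct rate is awkward, is to reduce to the reduction $\lambda$-Poisson genericity $\Rightarrow$ Borel normality together with effective versions of the Borel--Cantelli estimates underlying Theorem~\ref{thm:weiss}, whose variance bounds (second-moment computations on the occurrence counts $\sum_j I_j(x,\omega)$) are explicit and hence effectivizable.
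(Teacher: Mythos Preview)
Your outline matches the paper's architecture almost exactly: define clopen ``bad'' sets depending on a finite prefix, bound their measures effectively, assemble a Martin-L\"of test from the tails, and use a density argument in~$\lambda$ to pass from rationals to all positive reals. The one substantive gap is precisely the step you flag as the main obstacle, and neither of your two suggested routes closes it as written.

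Your route~(a) is circular: you say the tail sums $\sum_{k\ge K}\mu^\N(U_{\lambda,i,\e,k})$ are approximable from above \emph{once one has a convergence rate}, but that rate is the very thing you are trying to establish. Knowing that each summand is a computable rational and that the series converges does not by itself give you a computable modulus of convergence; the limit of a computable sequence need not be computable. Your route~(b) gestures at second-moment bounds but leaves the actual estimate unspecified. The paper fills this gap with an explicit Tail Bound Lemma: first, the Stein--Chen style total-variation estimate from the annealed argument gives
\[
d_{TV}\big(M_k((0,\lambda]),\mathrm{Po}(\lambda)\big)\le 5k(\lambda+1)b^{-k},
\]
which controls the distance between the annealed mean and the Poisson target; second, McDiarmid's bounded-differences inequality applied to $f_k(x)=\mu^k(\omega:M_k^x((0,\lambda])(\omega)=i)$ (a one-symbol change in $x$ shifts $f_k$ by at most $k\,b^{-k}$) gives
\[
\mu^\N\Big(x:\big|f_k(x)-\E[f_k(x)]\big|>1/k\Big)\le 2\exp\!\Big(\frac{-2b^k}{\lambda k^4}\Big).
\]
Combining the two yields a fully explicit, effectively summable bound on $\mu^\N(Bad(\lambda,k,i))$, and the paper then takes $L_k$ and $J_k$ to be \emph{finite} growing sets of rationals and indices so that the union bound is still summable. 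With that lemma in hand, the rest of your argument goes through verbatim; without it, the construction of the test is incomplete.
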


Theorem~\ref{thm:weiss} proves a metric result on  a notion of Poisson genericity stronger  than that of Definition~\ref{def:Poisson}
by considering point processes on~$\R^+$.
The technique used to prove Theorems~\ref{thm:computable} and~\ref{thm:random} applies for this stronger notion  as well, after some tweaking in the bounds.

\section{Proof of Theorem~\ref{thm:weiss}}

We follow Peres and Weiss' proof~\cite{weiss2020}. They first give   a randomized result
where one randomizes the sequence~$x\in \Omega^{\mathbb N}$. They call  it the 
{\em annealed} result. 
Then, they obtain the wanted  pointwise  result required in Theorem~\ref{thm:weiss} --also referred as the   {\em quenched} result-- by applying a concentration  inequality.

\subsection{The annealed result}

For each $k\in \N$,  on the product space  $(\Omega^{\N}\times\Omega^k,\mu^{\N}\times\mu^{k})$ 
we define  the integer-valued 
 random measure  $M_k=M_k(x,\omega)$ on  $\R^+$ by
\[
M_k(S)(x,\omega) = \sum_{j \in \N \cap b^k S} I_j(x,\omega),
\]
where $ \N\cap b^k S$  denotes the set of integer values in  
$\{ b^k s: s\in S\}$.

We  write  $A \convdist B$ to indicate convergence in distribution.

\begin{lemma} \label{lemma:annealed}
Let $Y(\cdot)$ be a 
Poisson process on~$\R^+$. Then,  $M_k(\cdot) \convdist Y(\cdot), \text { as } k \to \infty$.
\end{lemma}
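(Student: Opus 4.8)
The plan is to prove convergence in distribution of the point processes $M_k(\cdot)$ to a Poisson point process $Y(\cdot)$ on $\R^+$ by verifying the standard criterion: it suffices to show that for every finite collection of disjoint bounded Borel sets $S_1,\dots,S_m \subseteq \R^+$ (in fact one may restrict to finite disjoint unions of intervals with rational endpoints, which form a convergence-determining class), the joint distribution of $(M_k(S_1),\dots,M_k(S_m))$ converges to that of $(Y(S_1),\dots,Y(S_m))$, where the latter are independent Poisson random variables with parameters the Lebesgue measures $|S_1|,\dots,|S_m|$. I would invoke the method of factorial moments (equivalently, a multivariate Chen--Stein / Poisson approximation argument): since a vector of independent Poissons is determined by its joint factorial moments $\E\bigl[\prod_{t=1}^m (Y(S_t))_{(i_t)}\bigr] = \prod_t |S_t|^{i_t}$, it is enough to show that the corresponding joint factorial moments of $M_k$ converge to these products.

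The main computation is as follows. For a single set $S$, $M_k(S)(x,\omega) = \sum_{j \in \N \cap b^kS} I_j(x,\omega)$, so the $i$-th factorial moment is $\sum^{*} \E[I_{j_1}\cdots I_{j_i}]$ where the sum is over $i$-tuples of distinct indices $j_1,\dots,j_i \in \N \cap b^kS$. Here the expectation is over $(x,\omega)$ with product measure; by independence of the coordinates of $x$ and of $\omega$, one gets $\E[I_{j_1}(x,\omega)\cdots I_{j_i}(x,\omega)] = b^{-k i}$ \emph{provided} the occurrence windows $[j_t, j_t+k)$ are pairwise disjoint, because then the event $\{x[j_t,j_t+k)=\omega \text{ for all } t\}$ forces $i$ disjoint length-$k$ blocks of $x$ to each equal $\omega$; averaging first over $x$ gives $\mu^k(\{\omega\})^i$-type cancellation, then over $\omega$ gives $b^{-ki}$. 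When two windows overlap, the constraint either forces $\omega$ to be periodic (contributing a lower-order count of $\omega$'s) or is empty; in all cases the total contribution from tuples with at least one overlap is $O(k \cdot (b^k)^{i-1} \cdot b^{-k(i-1)} \cdot \text{const})$ relative to the main term, hence $O(k/b^k) \to 0$. The number of tuples with pairwise disjoint windows is $(\#(\N\cap b^kS))^i (1+o(1)) = (b^k|S|)^i(1+o(1))$, so the $i$-th factorial moment converges to $|S|^i$. The same bookkeeping handles the joint moments over disjoint $S_1,\dots,S_m$: cross terms between different $S_t$'s automatically have disjoint windows (up to a boundary correction of $O(k)$ indices near the finitely many endpoints, again negligible after dividing by $b^k$), so the joint factorial moment factorizes in the limit into $\prod_t |S_t|^{i_t}$, which is exactly the independence statement.

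The step I expect to be the main obstacle is the careful control of the overlapping-window terms, i.e. showing that tuples $(j_1,\dots,j_i)$ in which some pair of intervals $[j_s,j_s+k)$, $[j_t,j_t+k)$ intersect contribute negligibly. This requires the combinatorial observation that if $\omega$ must coincide with an overlapping shift of itself then $\omega$ has a period $d < k$, and the number of such $\omega$ is at most $b^{d} \le b^{k-1}$ (or, more crudely, the number of $(x,\omega)$ configurations consistent with a length-$(<2k)$ overlapping constraint is at most $b^{k}\cdot b^{k}$ rather than $b^{2k}$), so that summing over the $O(k \cdot b^k)$ choices of offsets and positions yields a term that is $o(1)$ once normalized. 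A secondary, more routine point is justifying that checking convergence on finite unions of rational intervals suffices and that a limiting random measure with the stated finite-dimensional distributions is indeed \emph{the} Poisson point process; for this I would cite the standard characterization of Poisson processes via factorial moment measures, as in \cite{kingman} or \cite{last}, so no new argument is needed there. Once Lemma~\ref{lemma:annealed} is in place, the passage to the quenched statement in Theorem~\ref{thm:weiss} is carried out separately via a concentration inequality, as indicated above.
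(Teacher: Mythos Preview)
Your factorial-moment plan is sound and would prove the lemma, but it is not the route the paper takes. The paper invokes Kallenberg's criterion (Proposition~\ref{kallenberg}): to get $M_k(\cdot)\convdist Y(\cdot)$ it suffices, for each finite union $S$ of rational intervals, to check $\limsup_k \E[M_k(S)]\le |S|$ and $\prob(M_k(S)=0)\to \prob(Y(S)=0)$. The first is a one-line computation; for the second the paper actually proves the stronger $d_{TV}(M_k(S),\Po{|S|})\to 0$ via the dependency-graph Chen--Stein bound (Proposition~\ref{prop:dtv}), which only requires \emph{pairwise} moments $\E[I_iI_j]$. The paper's key observation here is cleaner than your periodicity heuristic: when $|i-j|<k$, the periodicity constraint on $\omega$ and the reduced constraint on $x$ cancel \emph{exactly}, so $\E[I_iI_j]=b^{-2k}$ whether or not the windows overlap. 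Thus no separate ``overlap terms are lower order'' argument is needed at the pairwise level, and the whole bound comes out as $O(kb^{-k})$ immediately.

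Your approach trades this off: you avoid citing the Chen--Stein machinery and Kallenberg's theorem, at the cost of having to control factorial moments of every order $i$, which forces you into the higher-overlap combinatorics (three or more mutually overlapping windows, Fine--Wilf-type counts) that you flag as the main obstacle. That analysis can be pushed through, but it is genuinely more work than the paper's two-moment route. One minor correction to your intuition: for a single overlapping pair the contribution is not ``lower order'' but exactly $b^{-2k}$; it is only the \emph{count} of such pairs, $O(k b^k)$ versus $(b^k|S|)^2$, that makes them negligible in your moment sums. The paper's approach also has a practical payoff downstream: the explicit $d_{TV}$ bound feeds directly into the tail estimate (Lemma~\ref{lemma:tail}) used for the computability and Martin-L\"of results.
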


The proof of Lemma~\ref{lemma:annealed} uses a well-known sufficient condition
for  a sequence of  point processes to converge to a  Poisson point process.

 \begin{proposition} [cf. {{\cite[Theorem 4.18]{kallenberg2017random}}}]  \label{kallenberg}
Let $(X_k(\cdot))_{k\in\N}$ be a sequence of point processes on~$\R^+$ and
let~$Y(\cdot)$ be a 
Poisson process on~$\R^+$.
If for any $S\subseteq \R^+$ that is a  finite union  of disjoint intervals with rational
endpoints we have \samepage
\begin{enumerate}
    \item $\limsup\limits_{k \to \infty}  \E[X_k(S)] \leq \E[Y(S)]$  and 
    \item $\lim\limits_{k \to \infty}  \prob\pare{  X_k(S) = 0 } = \prob\pare{  Y(S) = 0 }$
\end{enumerate}
then $X_k(\cdot) \convdist Y(\cdot)$, as $k \to \infty$. 
\end{proposition}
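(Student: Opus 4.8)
The plan is to derive the convergence of the processes from a single one--dimensional fact: for every $S$ that is a finite union of disjoint intervals with rational endpoints, $X_k(S)\convdist\Po{\size{S}}$, where $\size{S}$ denotes the Lebesgue measure of $S$ and $\Po{\size{S}}=Y(S)$. The engine is a binning argument in which hypotheses (1) and (2) play complementary roles. That the class of such sets is rich enough to control the whole process is the structural content behind the statement: a simple point process on $\R^+$ is determined in distribution by its void probabilities $\prob\pare{\cdot(U)=0}$ over $U$ in a dissecting ring, and the finite unions of disjoint rational intervals form such a ring; the process $Y$ is simple because its intensity, Lebesgue measure, is diffuse.

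To prove the one--dimensional fact, fix such an $S$ and, for each $n$, partition it into $n$ cells $S_1,\dots,S_n$ of equal measure $\size{S}/n$ and rational endpoints, and let $N_{k,n}=\sum_{i=1}^n\one_{\{X_k(S_i)\geq 1\}}$ count the occupied cells. The decisive observation is that for every $A\subseteq\{1,\dots,n\}$ the union $\bigcup_{i\in A}S_i$ is again a finite union of disjoint rational intervals, so hypothesis (2) applies to it and gives $\prob\pare{X_k(S_i)=0\text{ for all }i\in A}\to e^{-\size{A}\,\size{S}/n}$ as $k\to\infty$. These are exactly the joint void probabilities of $n$ independent $\mathrm{Bernoulli}(1-e^{-\size{S}/n})$ variables; since such quantities determine a $\{0,1\}^n$--valued law by inclusion--exclusion, $N_{k,n}\convdist\mathrm{Binomial}\pare{n,\,1-e^{-\size{S}/n}}$ as $k\to\infty$.

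It remains to compare $X_k(S)$ with $N_{k,n}$ and let $n$ grow. The difference $X_k(S)-N_{k,n}=\sum_{i=1}^n\pare{X_k(S_i)-\one_{\{X_k(S_i)\geq 1\}}}\geq 0$ counts the surplus points in multiply occupied cells, and hypotheses (1) and (2) give $\limsup_k\E\corch{X_k(S)-N_{k,n}}\leq\size{S}-n\pare{1-e^{-\size{S}/n}}=:\delta_n$ with $\delta_n\to 0$. As $X_k(S)-N_{k,n}$ is a nonnegative integer, $\prob\pare{X_k(S)\neq N_{k,n}}\leq\E\corch{X_k(S)-N_{k,n}}$, so $X_k(S)$ and $N_{k,n}$ are within $\delta_n$ in total variation in the limit; meanwhile $\mathrm{Binomial}\pare{n,1-e^{-\size{S}/n}}$ converges to $\Po{\size{S}}$ as $n\to\infty$ by the classical Poisson limit theorem. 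A triangle inequality over the iterated limits $k\to\infty$ then $n\to\infty$ yields $X_k(S)\convdist\Po{\size{S}}=Y(S)$. The asymmetric form of (1), with $\limsup\leq$ rather than equality, is precisely what is needed: the void probabilities already force $\liminf_k\E\corch{X_k(S)}\geq n\pare{1-e^{-\size{S}/n}}$ for each $n$, hence $\liminf_k\E\corch{X_k(S)}\geq\size{S}$, so (2) supplies the lower bound on the mass for free and (1) need only contribute the matching upper bound that makes the surplus vanish.

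Finally I would upgrade the one--dimensional fact to the convergence of the processes. Hypothesis (1) gives $\limsup_k\E\corch{X_k(B)}<\infty$ on bounded sets, so by Markov's inequality the counts are tight and, by Prohorov's theorem, $(X_k)$ is relatively compact in distribution; identifying every subsequential limit with $Y$ through the convergence of void probabilities and the uniqueness stated above gives $X_k\convdist Y$, and the marginal convergence $X_k(S)\convdist Y(S)$ then follows for every interval with arbitrary real endpoints by sandwiching between nice sets and using continuity of $\Po{\cdot}$ in its parameter, which is what the applications of the statement require. I expect the identification step, rather than the binning estimate, to be the main obstacle: the map $\mu\mapsto\one_{\{\mu(U)=0\}}$ is not continuous in the vague topology at configurations that charge $\partial U$, so matching the void probabilities of a subsequential limit with those of $Y$ demands restricting to sets whose boundary carries no mass almost surely. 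The freedom to move the rational endpoints, together with the diffuseness of the Lebesgue intensity, is exactly what lets one choose such a boundary--avoiding dissecting subring and conclude.
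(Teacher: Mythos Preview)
The paper does not prove this proposition; it is quoted as a known result from Kallenberg's book (hence the ``cf.'' in the heading), and the text moves directly on to Proposition~\ref{prop:dtv}. There is therefore no in-paper argument to compare against.

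Your outline is a sound reconstruction of the standard proof. The binning argument is correctly executed: because the endpoints of $S$ are rational, $\size{S}\in\Q$, the equal-measure cells $S_i$ can indeed be taken with rational endpoints (each $S_i$ being a finite union of intervals), and the identity $\{X_k(S_i)=0\ \forall i\in A\}=\{X_k(\bigcup_{i\in A}S_i)=0\}$ lets hypothesis~(2) deliver all the joint void probabilities needed to pin down the Bernoulli law via inclusion--exclusion. The surplus estimate $\limsup_k\E\corch{X_k(S)-N_{k,n}}\le\size{S}-n(1-e^{-\size{S}/n})$ is exactly right and is where (1) and (2) mesh. Your final paragraph correctly flags the only genuinely delicate point, namely that the evaluation $\mu\mapsto\one_{\{\mu(U)=0\}}$ is discontinuous at configurations charging $\partial U$, so one must restrict to a dissecting ring of stochastic continuity sets; the diffuseness of Lebesgue measure and the freedom to perturb rational endpoints indeed supply such a ring. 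This is precisely the content of Kallenberg's argument, so your sketch matches the cited source in both strategy and emphasis.
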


The total variation distance $d_{TV}$ between two probability measures $P$ and $Q$ on a 
$\sigma-$algebra~$\mathcal{F}$ is defined via
\begin{align*}
d_{TV}(P,Q)=\sup_{ A\in \mathcal{F}}\left|P(A)-Q(A)\right|.
\end{align*}
For a random variable $X$ taking values in $\R$, 
the distribution of~$X$ 
is the probability measure~$\mu_X$ on~$\R$ defined as the push-forward of the probability measure on the sample space of~$X$. The total variation distance between two random variables~$X$ and $Y$ is simply 
\begin{align*}
d_{TV}(X,Y) = d_{TV}(\mu_X, \mu_Y).
\end{align*}
Notice that $X$ and $Y$ do not need to be defined over the same space.

Given a family $\set{ I_j}_{j \in J}$ of random variables on the same probability space, 
a \emph{dependency graph} for such a family is a graph $L$
with underlying vertex set $J$
 such that for any pair of disjoint subsets $A,B\subseteq J$  of vertices  
 with no  edge $e = (a,b), \, a\in A,\, b\in B$ connecting them,  the subfamilies
$\set{ I_i}_{i \in A}$ and $\set{ I_j}_{j \in B}$ are mutually independent.

\begin{proposition}[{{\cite[Theorem 6.23]{janson2011random}}}] \label{prop:dtv}
Let $\Po{\lambda}$  be  a Poisson random variable with mean $\lambda$.
Let  $\set{ I_j}_{j \in J}$ be a family  of random variables on a given  probability space and
let $L$ be its  dependency graph  with underlying vertex set $J$.
Suppose that the random variable $X_J = \sum_{j\in J} I_j$
satisfies $\l = \E \corch{ X_J} = \sum_{j\in J} \E \corch{ I_j } $.
Then,
\begin{align*}
    d_{TV} (X_J, \Po{\l} )
    \leq \min\set{ 1, \l^{-1}}\pare{  \sum_{j \in J} \E \corch{I_j}^2 
    +  \sum_{\substack{ i,j : (i,j) \in \edge{L}  }}
\Big(     \E \corch{I_i I_j} + \E \corch{I_i}  \E \corch{ I_j } 
    \Big)}.
\end{align*}
\end{proposition}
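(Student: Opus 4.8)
The plan is to prove the stated inequality by the Stein--Chen method for Poisson approximation in the local-dependence setting. The starting point is the Stein characterization of the Poisson law: a nonnegative integer-valued random variable $W$ is $\Po{\l}$-distributed if and only if $\E\corch{\l g(W+1) - W g(W)} = 0$ for every bounded $g\colon\N_0\to\R$. Accordingly, for each $A\subseteq\N_0$ I would introduce the solution $g_A$ of the Stein equation
\[
\l\, g_A(j+1) - j\, g_A(j) = \ind_A(j) - \prob\pare{\Po{\l}\in A},
\]
built by the standard forward recursion. The analytic crux is the \emph{Stein factor} bound on its first difference, $\|\Delta g_A\| := \sup_j\abs{g_A(j+1)-g_A(j)} \le \min\set{1,\l^{-1}}$ (indeed $\le(1-e^{-\l})/\l$). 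I expect this to be the main obstacle: it is not a formal manipulation but requires solving the recursion explicitly and estimating the resulting alternating partial sums, so I would either reproduce that estimate or cite it, and write $\|\Delta g_A\|$ for this quantity below.

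Using $d_{TV}(X_J,\Po{\l}) = \sup_A \abs{\prob(X_J\in A) - \prob(\Po{\l}\in A)}$ together with the Stein equation evaluated at $W=X_J$, the problem reduces to bounding $\abs{\E\corch{\l g(X_J+1) - X_J g(X_J)}}$ uniformly over $A$, where $g=g_A$. Expanding $X_J=\sum_j I_j$ and $\l=\sum_j\E\corch{I_j}$, and writing $W_j = X_J - I_j$ so that $I_j\, g(X_J) = I_j\, g(W_j+1)$ because $I_j\in\set{0,1}$, this expression becomes $\sum_j T_j$ with $T_j = \E\corch{I_j}\,\E\corch{g(X_J+1)} - \E\corch{I_j\, g(W_j+1)}$.

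The heart of the argument is a dependency-graph decomposition of each $T_j$. For each $j$ let $B_j$ be its closed neighborhood in $L$ and set $V_j = \sum_{i\notin B_j} I_i$, so that $I_j$ is independent of $V_j$ by the defining property of $L$. Writing $p_j=\E\corch{I_j}$, I would split $T_j = p_j\,\E\corch{g(X_J+1)-g(W_j+1)} + \E\corch{(p_j - I_j)\, g(W_j+1)}$. The first summand equals $p_j\,\E\corch{I_j\,\Delta g(W_j+1)}$ since $X_J=W_j+I_j$, hence is bounded by $\|\Delta g\|\, p_j^2 = \|\Delta g\|\,\E\corch{I_j}^2$, the diagonal contribution. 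In the second summand one writes $W_j = V_j + \sum_{i\in B_j\setminus\set{j}} I_i$ and adds and subtracts $g(V_j+1)$: the term $\E\corch{(p_j-I_j)\, g(V_j+1)}$ vanishes because $I_j$ is independent of $V_j$ and $\E\corch{p_j-I_j}=0$, while the remainder is bounded in absolute value, via telescoping of $g(W_j+1)-g(V_j+1)$, by $\|\Delta g\|\;\E\corch{\,\abs{p_j-I_j}\sum_{i\in B_j\setminus\set{j}} I_i}$.

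Finally, bounding that last expectation by $\sum_{i\in B_j\setminus\set{j}}\pare{\E\corch{I_iI_j} + \E\corch{I_i}\E\corch{I_j}}$ and summing over $j$, the double sum over ordered pairs $(i,j)$ with $i$ adjacent to $j$ is exactly the edge sum $\sum_{(i,j)\in\edge{L}}\pare{\E\corch{I_iI_j}+\E\corch{I_i}\E\corch{I_j}}$ of the statement, and the diagonal pieces assemble into $\sum_j\E\corch{I_j}^2$. Inserting $\|\Delta g\|\le\min\set{1,\l^{-1}}$ and taking the supremum over $A$ then yields the asserted bound. The only genuinely delicate point is the Stein factor estimate; once the device of introducing $V_j$ independent of $I_j$ is in place, the decomposition is routine bookkeeping.
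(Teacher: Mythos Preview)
The paper does not prove this proposition: it is quoted verbatim as Theorem~6.23 of Janson, \L uczak and Ruci\'nski and used as a black box in the proof of Lemma~\ref{lemma:annealed}. So there is no ``paper's own proof'' to compare against.

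Your sketch is the standard Stein--Chen local-dependence argument (essentially the proof one finds in the cited reference or in Barbour--Holst--Janson), and it is correct. A couple of small remarks. First, you tacitly assume the $I_j$ are $\{0,1\}$-valued when you write $I_j\,g(X_J)=I_j\,g(W_j+1)$; the proposition as stated in the paper does not say this explicitly, but it is the intended hypothesis (and the only way the bound makes sense), so it is worth stating up front. Second, in bounding $\E\big[\abs{p_j-I_j}\,I_i\big]$ you silently use $\abs{p_j-I_j}\le p_j+I_j$, which gives exactly $\E\corch{I_i}\E\corch{I_j}+\E\corch{I_iI_j}$; this is fine but is the step that matches the edge sum in the statement to your decomposition, so it deserves one explicit line. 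Finally, as you note yourself, the only genuinely nontrivial input is the Stein factor bound $\|\Delta g_A\|\le\min\{1,\l^{-1}\}$; since you are reconstructing a cited theorem, citing Barbour--Eagleson or Barbour--Holst--Janson for this estimate is entirely appropriate.
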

For a measurable set $S\subseteq \R^+$, 
we write $|S|$ for the Lebesgue measure of~$S$.
\begin{proof}[Proof of Lemma~\ref{lemma:annealed}]
We apply Proposition~\ref{kallenberg}.
For the first condition,
it is enough to consider $S\subseteq \mathbb R^+$ to be an interval $(p,q)$ with rational endpoints, in which case
\begin{align*} 
\E \corch{M_k(S)} &= \int\limits_{(x,\omega)\in \Omega^\N\times\Omega^k}
M_k(S)(x,\omega)\  {\rm d}(\mu^{\N}\times\mu^{k}) \\
    &= \frac{1}{b^k} \sum_{\omega \in \Omega^k} 
    \sum_{j \in \N \cap b^k S} \int\limits_{x \in \Omega^\N} I_j(x, \omega) {\rm d} (\mu^\N) \\
    & = \frac{1}{b^{2k}}
    b^k \pare{b^k |S| + {\rm O}(1)}.
\end{align*}
Then, $\E \corch{M_k(S)}$ converges to $|S|$  as $k$ goes to $\infty$.
The ${\rm O}(1)$ term is in fact bounded by~$2.$

For the second  condition of Proposition~\ref{kallenberg}
we show that 
when   $S$ is finite union of intervals with rational endpoints,
the total variation distance 
$d_{TV}(M_k(S), Y(S))$ goes to $ 0$ as $k $ goes to infinity. 
This implies that  the sequence $(M_k(S))_{k\geq 1}$ of random variables  converges in distribution to the  Poisson random variable~$Y(S)$.

We regard  the indicator functions $I_j=I_j(x,\omega)$ as random variables
on  the 
space \linebreak
$(\Omega^\N\times\Omega^k,\mu^\N\times\mu^k)$, 
\[
I_j(x,\omega) = \ind_\set{x[j, j+k) = \omega}.
\]
The dependency  of these random variables is very sparse. 
There is some 
dependence between $I_i$ and $I_j$ only when 
$ \abs{j - i} < k$. Even in such a case,
$I_i(x,\omega) I_{j}(x,\omega) = 1$ 
is only possible when 
 the prefix of $\omega$ of length
 $k-(j-i)$ 
  is the same as the suffix 
of the same length. 
If $i<j$ and $ {j - i} < k$ then 
\[
\mu^k \Big(\omega\in\Omega^k:  \omega(j-i, k] =\omega[1, k-(j-i)]\Big)
= b^{-k + (j-i)}
\] and for each of these $\omega$'s
\[
\mu^\N \Big(x\in \Omega^N_{}: x[i, i+k) = x[j, j+k) = \omega \Big)
=b^{-k - (j-i)}.
\]
Hence, 
\[
\mu^\N\times\mu^k\pare{(x,\omega)\in\Omega^\N\times\Omega^k: I_i(x,\omega)I_j(x,\omega)=1} = b^{-2k},
\]
which is the same as if $I_i$ and $I_j$ were independent.
Notice that 
$\E \corch{\I_j} = b^{-k},
$ and 
$\E \corch{\I_{i} \I_{j}}  = b^{-2k}$. 
The dependency graph $L$ is:
$(i,j) \in \edge{L}$ if and only if $|i - j| < k$.
We apply Proposition~\ref{prop:dtv} to bound $d_{TV}(M_k(S), Y(S) )$, 
where $Y(S)$ has a Poisson distribution with mean~$|S|.$
For a union of $n$ disjoint intervals $S = \bigcup\limits_{i=1}^n (p_i, q_i)$
it yields, 
\begin{align*}
d_{TV}\pare{ M_k(S),  Y(S) }  
&\leq \min\set{ 1, |S|^{-1}} 
\left( \sum_{j \in \N \cap b^k S} {\E [\I_j]}^2 
    + \sum_{\substack{i,j\in \N \cap b^k S \\ |i - j| < k }}
    \Big( \E[\I_i \I_j] + \E [\I_i]\E [\I_j]\Big)
    \right) \\
    & \leq  \sum_{ j \in \N \cap b^k S} b^{-2k} + \sum_{\substack{i,j\in \N \cap b^k S \\ |i - j| < k }} 2 b^{-2k} \\
    & \leq  \pare{ |S| b^k + n } b^{-2k} + \pare{ |S| b^k + n } \ 2k \ 2b^{-2k}.
\end{align*}
The last expression goes to $0$ as~$k$ goes to infinity.
Then, $M_k(S) \convdist Y(S)$,
as~$k$ goes to infinity.
\end{proof}

\subsection{The quenched result}

We use now a classical  concentration  inequality, which estimates the error from the average behaviour.

\begin{proposition}[McDiarmid's inequality \cite{McDiarmid1989}] \label{prop:McD} 
Let $X_1, \ldots, X_N$ be independent random variables taking values in some set $\Omega.$ 
Assume $f : \Omega^N \to \R$ satisfies that for any two 
vectors $x, x'\in \Omega^N$  which differ only in a single coordinate, we have 
\begin{align}\label{e:inequhyptheom}
    \abs{f (x) - f (x')} \leq c,\tag{$\dagger$}
\end{align}
 for some positive $c=c(N).$
Let us write $f(X)$ for the composition $f(X_1,\ldots,X_N)$ and let $\prob$ denote the probability on the underlying domain.
Then, for any $t \geq 0,$ we have 
\begin{align*}
    \prob\pare{ \abs{f(X) - \E [f(X)] } > t } \leq 2\exp\pare{ \frac{-2t^2}{Nc^2} }.
\end{align*}
\end{proposition}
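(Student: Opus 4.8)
The plan is to establish McDiarmid's inequality by the classical martingale (Doob) decomposition combined with the exponential moment method, which reduces the bound to an Azuma--Hoeffding estimate for a martingale with bounded increments. First I would introduce the filtration $\mathcal{F}_i=\sigma\pare{X_1,\ldots,X_i}$ and the Doob martingale $V_i=\E\corch{f(X)\mid\mathcal{F}_i}$ for $i=0,1,\ldots,N$. By construction $V_0=\E\corch{f(X)}$ and $V_N=f(X)$, so the centered quantity telescopes: $f(X)-\E\corch{f(X)}=\sum_{i=1}^N D_i$, where the increments $D_i=V_i-V_{i-1}$ form a martingale difference sequence, that is $\E\corch{D_i\mid\mathcal{F}_{i-1}}=0$.

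The crucial step is to control each increment $D_i$ conditionally on the past. Because the $X_i$ are independent, $V_i=g_i\pare{X_1,\ldots,X_i}$ where $g_i(x_1,\ldots,x_i)=\E\corch{f(x_1,\ldots,x_i,X_{i+1},\ldots,X_N)}$ is the partial average of $f$ over its last $N-i$ coordinates. Viewed as a function of $X_i$ with the past frozen, $D_i$ is just the centering of the map $x_i\mapsto g_i\pare{X_1,\ldots,X_{i-1},x_i}$. Since $g_i$ averages $f$ only over coordinates that are left untouched, the bounded-differences hypothesis~\eqref{e:inequhyptheom} transfers to $g_i$: changing the $i$-th argument alters $g_i$ by at most $c$. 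Consequently, conditionally on $\mathcal{F}_{i-1}$, the increment $D_i$ is centered and confined to an interval of length at most $c$.

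At this point I would invoke Hoeffding's lemma, which states that a centered random variable taking values in an interval of length $c$ has exponential moment at most $\exp\pare{s^2c^2/8}$. Applied conditionally on $\mathcal{F}_{i-1}$, it yields $\E\corch{e^{sD_i}\mid\mathcal{F}_{i-1}}\leq\exp\pare{s^2c^2/8}$. Chaining these estimates from $i=N$ down to $i=1$ through the tower property gives $\E\corch{\exp\pare{s\pare{f(X)-\E\corch{f(X)}}}}\leq\exp\pare{Ns^2c^2/8}$. A Chernoff argument then controls the upper tail: Markov's inequality gives, for every $s>0$, $\prob\pare{f(X)-\E\corch{f(X)}>t}\leq\exp\pare{-st+Ns^2c^2/8}$, and the choice $s=4t/\pare{Nc^2}$ minimizes the exponent to $-2t^2/\pare{Nc^2}$. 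Since $-f$ satisfies~\eqref{e:inequhyptheom} with the same constant, the identical bound holds for the lower tail, and a union bound over the two tails supplies the factor $2$, completing the argument.

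The main obstacle I anticipate is the conditional range control of the second step: one must check carefully that the bounded-differences property survives the partial averaging defining $g_i$, and that conditionally on the past the increment $D_i$ is simultaneously centered and confined to an interval of width at most $c$, rather than $2c$. Pinning the interval width down to exactly $c$ is precisely what makes Hoeffding's lemma deliver the sharp exponent $2t^2/\pare{Nc^2}$ instead of a lossy constant.
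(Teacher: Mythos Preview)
Your argument is the standard and correct proof of McDiarmid's inequality via the Doob martingale and Hoeffding's lemma, and the delicate point you flag---that the conditional range of each increment $D_i$ is at most $c$ rather than $2c$---is handled correctly by your observation that $g_i(x_1,\ldots,x_{i-1},\cdot)$ inherits the bounded-differences property from $f$ after averaging over the untouched coordinates.

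That said, the paper does not prove this proposition at all: it is quoted as a black box from McDiarmid's original article \cite{McDiarmid1989} and used as a tool in the proof of Theorem~\ref{thm:weiss} and Lemma~\ref{lemma:tail}. So there is no ``paper's own proof'' to compare against; your proposal simply supplies the classical argument that the paper omits.
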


We can now give the proof of Theorem \ref{thm:weiss}. We use the well-known Borel--Cantelli lemma, see  \cite[Chapter 3, Lemma 1]{feller}, which says that for a sequence
of subsets  $(A_n)_{n\in\N}$  in a probability space $(\X,\mu_\X)$,
if $\sum_{n\geq 1}\mu_\X(A_n)<\infty$, then $\mu_\X(\limsup A_n)=0$, that is, the set of points which are contained in infinitely many $A_n$ has null  measure.
Under these conditions, $\X - \limsup A_n$ is a full measure set.

\begin{proof}[Proof of Theorem \ref{thm:weiss}]
We want to show that, for almost every $x$ 
in $\Omega^\N$, as $k$ goes to infinity,
the processes $M^{x}_k(.)$ converge in distribution to 
$Y(\cdot)$, 
where $Y(\cdot)$ is a 
Poisson process on $\R^+$. 
By Proposition~\ref{kallenberg} it suffices to consider sets 
$S\subseteq \R^+$ that are finite unions of disjoint intervals with rational endpoints.
The first condition of Proposition~\ref{kallenberg} holds because  or each such $S$,
$\E[M_k^{x}(S)]=\size{S} + {\rm O}(b^{-k}).$

We now  verify the second condition of  Proposition~\ref{kallenberg}. Let $n$ be the number of disjoint intervals of~$S$.
Given $x \in \Omega^\N ,$ the probability
$\mu^k\pare{\omega\,:\,M^{x}_k(\omega)(S)=i }$ 
depends on $N = |S| b^k + \e n k $ coordinates of $x$,
for some $\e \in [0,1).$ This is because $S$ is the union of~$n$ disjoint intervals and for each of them one must consider at most one extra coordinate to 
take into account its alignment with integer values, and $k-1$ extra coordinates to fit $\omega$. 
We apply Proposition~\ref{prop:McD} 
to the function
$  f_k  : \Omega^N \to \R  $
given  by
\[ 
f_k(x)= \mu^k\pare{\omega\,:\,M^{x}_k(\omega)(S)=i }. 
\]
Since a one-coordinate change in $x$ affects no more than $k$ of the $\omega$'s in the counting
for $M^{x}_k(\omega),$ the inequality~\eqref{e:inequhyptheom} is satisfied 
with $ c = {k}{b^{-k}}.$
By choosing $t_k = 1/k$ one gets
\begin{align*}
    \sum_{k = 1}^\infty \mu^{\N}\pare{x: \,\abs{f_k(x) - \E [f_k(x)]} > t_k } \leq 
     2 \sum_{k = 1}^\infty \exp\pare{  { - k^{-4} b^k \pare{ |S| + {2nk}{b^{-k}} }^{-1}  } }
\end{align*}
and this expression converges.
Then, by the Borel--Cantelli lemma
the limsup event
\begin{align*} 
\set{\,  x: \, \abs{f_{k}(x) - \E [f_{k}(x)]} > t_{k} \, \text{for infinitely many }k } 
\end{align*}
has probability $\mu^\N$ zero. That is to say, for almost every $x\in\Omega^\N$
the probabilities 
\[
\mu^k\pare{\omega\,:\,M^{x}_k(S)(\omega)=i }
\]
converge, as $k$ goes to infinity, to the same limit as that of 
\[\E \corch{  \mu^k\pare{\omega\,:\,M^{x}_k(S)(\omega)=i
}}.
\]
Given the identity 
\[ 
\E \corch{  \mu^k\pare{\omega\,:\,M^{x}_k(S)(\omega)=i
}}= \mu^\N \times  \mu^k\pare{(x,\omega)\,:\,M_k(S)(x,\omega)=i } .
\]
and that, by  Lemma~\ref{lemma:annealed}, 
\[
M_k(S)\convdist Y(S), \text{ as $k\to\infty$},
\]
 we conclude that  
\[
\mu^k\pare{\omega\,:\,M^{x}_k(S)(\omega)=i }
\text{converge  to } \prob\pare{Y(S)=i}, 
\text{ as $k\to\infty$}.
\]
This happens for every $i\geq 0$ and
for every $S$  that is a finite union of intervals with rational endpoints. 
Since a countable union of sets of probability zero has probability zero as well, we conclude that
for $\mu^\N$-almost every $x\in\Omega^\N$, 
\[
M^{x}_k(S)\convdist Y(S), \text{ as $k\to\infty$}
\]
 for all such sets $S$.
\end{proof}

\section{Proofs of Theorem \ref{thm:computable} and Theorem \ref{thm:random}}

In this section we use three technical results from~\cite{Rojas}  for {\em computable metric spaces}~$\X$  and    {\em computable probability measures}~$\mx$ on~$\X$.   We start with the primary definitions.

The notion of computability is defined for many objects and spaces~\cite{W}.  For instance, a real number $x$ is  computable 
if there is a computable function  $f:\N\to \Q$ such  that $|x-f(n)|\leq 2^{-n}$, for all $n$.
A sequence of elements in a space~$\X$ is {\em uniformly computable} if there is a computable function $f: \N\times\N\to\X$
such that the $n$-th element in the sequence is computed by the projection  $f_n(x)= f(n,x)$.

A metric space  is a pair $(\X,d)$, where $\X$ is non-empty and $d$ is a distance between  elements in $\X$. 
A metric space is  complete if every Cauchy sequence of elements in $\X$ has a limit also  in~$\X$.
A space $\X$ is separable if it contains a countable dense subset.
A  computable metric space is a triple $(\X, d, S)$, where
$X$ is a separable  metric space (also known as a Polish space)  that contains a
countable dense subset 
$S = \{s_i\in \X : i \in N\}$
and the distance $d(x,y)$ between elements $x,y$ in $S$ is computable.
A probability measure $\mu_{\X}$  over a computable metric space $(\X, d,S)$   is  computable if  the probability measure of any  finite union of  balls with rational radius and centered in elements in $S$ can be computably approximated from below, uniformly.

\begin{fact}
The space $(\Omega^\N,d,S)$ 
where
 $S$ is the set of computable elements in~$\Omega^\N$ and \linebreak
$d(x,y)=b^{-lcp(x,y)}$  
with  $b$ equal to  the cardinality of~$\Omega$
and $lcp(x,y)$ equal to  the length of the longest common prefix between $x$ and $y$, 
 is  a computable complete metric space.
The product measure~$\mu^\N$ is a computable probability measure on the Borel sets of $\Omega^\N$.
\end{fact}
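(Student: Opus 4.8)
The plan is to verify, in turn, the clauses defining a computable metric space for $(\Omega^\N,d,S)$ and then the approximation condition for $\mu^\N$. I would first record that $d$ is an ultrametric: since $lcp(x,z)\ge\min\{lcp(x,y),lcp(y,z)\}$ one gets $d(x,z)\le\max\{d(x,y),d(y,z)\}$, so in particular $d$ is a metric. Completeness of $(\Omega^\N,d)$ is the standard observation that along a Cauchy sequence the prefix of every fixed length eventually stabilises, so the coordinatewise limit is a well-defined element of $\Omega^\N$ to which the sequence converges; together with separability this makes $\Omega^\N$ Polish. For separability and density of $S$, I would note that for any $x\in\Omega^\N$ and any $k$ the sequence $x[1,k]\,z$, where $z\in\Omega^\N$ is a fixed computable (say constant) tail, is computable and lies within $b^{-k}$ of $x$; and $S$ is countable because there are only countably many programs.

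The one clause that needs a genuine argument is that $d$ restricted to $S$ is computable, because $S$ is taken to be the set of \emph{all} computable sequences and equality of computable sequences is undecidable, so $lcp(s_i,s_j)$ is not computable as an extended natural number. The point is that only a $2^{-n}$-approximation to $d(s_i,s_j)$ is asked for. Given indices $i,j$ and $n$, I would run the two programs in parallel, producing one symbol of each at a time; if at some finite stage the prefixes first disagree at position $p$, output $b^{-(p-1)}$ exactly; if instead the prefixes of length $n$ agree, output $0$. This procedure always halts, and in the second case the true distance is at most $b^{-n}\le 2^{-n}$, so the output is within $2^{-n}$ of $d(s_i,s_j)$. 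The same procedure, run uniformly in the indices, shows that the pairwise distances on $S$ are computable uniformly, which is what the definition requires.

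For the measure, the ultrametric makes balls trivial to describe: for a rational $r$ with $0<r\le 1$ the ball $B(s,r)$ equals the cylinder $B_{s[1,k]}$, where $k$ is the least integer with $b^{-k}<r$ (and $B(s,r)=\Omega^\N$ when $r>1$), and the defining word $s[1,k]$ is computable from an index for $s$ and from $r$. Hence a finite union of such balls, after refining all the defining words to a common length $M$, becomes a finite union of length-$M$ cylinders that can be listed effectively, and its $\mu^\N$-measure is $b^{-M}$ times the number of listed cylinders — a rational number computed exactly, and uniformly in the given data. Exact computability in particular gives uniform approximability from below, which is the property in the definition of a computable probability measure; that $\mu^\N$ is a Borel probability measure at all is immediate since it is the usual product of uniform measures.

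I expect the main obstacle, such as it is, to be precisely the computability of $d$ on $S$: the thing to get right is that one never needs $lcp(s_i,s_j)$ itself, only a dyadic rational within $2^{-n}$ of $b^{-lcp(s_i,s_j)}$, and this is available as soon as the two finite prefixes agree for $n$ symbols, irrespective of whether the two sequences are in fact equal. Everything else is bookkeeping with finite prefixes and cylinder measures.
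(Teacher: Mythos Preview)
The paper states this as a \emph{Fact} without proof, so there is no argument in the paper to compare against. Your verification is correct and hits the one nontrivial point: that although $lcp(s_i,s_j)$ is not computable (equality of computable sequences being $\Pi^0_1$), the real number $b^{-lcp(s_i,s_j)}$ is nonetheless computable because matching prefixes of length $n$ already force the distance below $b^{-n}\le 2^{-n}$. Your treatment of the measure via reduction of rational balls to cylinders is likewise the right observation.

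One small caveat worth flagging, which is really an imprecision in the paper rather than in your argument: taking $S$ to be \emph{all} computable elements of $\Omega^\N$ presupposes an effective enumeration of $S$, but the set of indices of total computable functions is $\Pi^0_2$ and not itself computable. Your procedure ``run the two programs in parallel, producing one symbol of each at a time'' tacitly assumes both programs halt on every input. The standard fix is either to take a simpler dense sequence (e.g.\ finite words extended by a fixed constant tail, which you in effect already use for density), or to pad partial programs with a default symbol via a time-bounded simulation; either choice makes the enumeration and the distance approximation genuinely uniform. With that understood, your sketch is complete.
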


A sequence  $(x_i)_{i\geq 1}$ of real numbers  is {\em effectively summable} if for every~$\varepsilon\in \mathbb Q$,  we can compute $n=n(\varepsilon)$ such that 
$\sum_{i\geq n}x_i<\varepsilon.$
A sequence $(U_n )_{n\geq 1}$ of open sets included in a computable metric space $\X$
is {\em constructive Borel--Cantelli }
if it is a uniformly computable sequence of
open sets such that the sequence $(\mx(\X \setminus U_n))_{n\geq 1}$ is effectively summable.
Given a constructive Borel--Cantelli  sequence $(U_n )_{n\geq 1}$ the corresponding {\em Borel--Cantelli set} is~$\bigcup_{k\geq 1}\bigcap_{n>k} U_n$.

\begin{lemma}[{{\protect{\cite[Lemma 3]{Rojas}}}}] \label{lemma:transformed}
Let $\X$ be a computable probability space with computable measure~$\mu_X$.
Every constructive Borel--Cantelli sequence can be 
transformed into a constructive
Borel--Cantelli sequence $(U_n)_{n\geq 1}$ giving the same Borel--Cantelli set, with 
$\mx(\X \setminus U_n) < 2^{-n}$.
\end{lemma}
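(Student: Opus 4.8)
The plan is to reduce the statement to a routine rescaling of the summability parameter in a constructive Borel--Cantelli sequence. Let $(V_n)_{n\geq 1}$ be an arbitrary constructive Borel--Cantelli sequence, so that it is uniformly computable and the sequence $a_n := \mx(\X\setminus V_n)$ is effectively summable, i.e.\ from $\varepsilon\in\Q$ we can compute $m(\varepsilon)$ with $\sum_{n\geq m(\varepsilon)} a_n<\varepsilon$. The goal is to produce a uniformly computable sequence $(U_n)_{n\geq 1}$ with $\mx(\X\setminus U_n)<2^{-n}$ for every $n$, and such that $\bigcup_{k\geq 1}\bigcap_{n>k}U_n=\bigcup_{k\geq 1}\bigcap_{n>k}V_n$.

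The construction I would use is to set $U_n := \bigcap_{j=g(n)}^{g(n+1)-1} V_j$ for a suitable computable, strictly increasing function $g:\N\to\N$; equivalently one takes a rapidly growing ``tail block'' of the $V_j$'s. First I would choose $g(n)$ so large that $\sum_{j\geq g(n)} a_j < 2^{-n}$: this is possible effectively because $(a_n)$ is effectively summable, so $g(n):=m(2^{-n})$ works and is computable in $n$. Then, by subadditivity,
\[
\mx(\X\setminus U_n)=\mx\Bigl(\bigcup_{j=g(n)}^{g(n+1)-1}(\X\setminus V_j)\Bigr)\le\sum_{j=g(n)}^{g(n+1)-1}a_j\le\sum_{j\geq g(n)}a_j<2^{-n},
\]
which gives the required bound. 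Each $U_n$ is a finite intersection of uniformly computable open sets, hence the sequence $(U_n)_{n\geq 1}$ is again uniformly computable (a finite intersection of basic-open-ball unions is again effectively describable as such a union within the computable metric space $\X$), and the new tail sums $\mx(\X\setminus U_n)<2^{-n}$ are trivially effectively summable, so $(U_n)$ is a constructive Borel--Cantelli sequence.

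It remains to check that the Borel--Cantelli set is unchanged. Since $g$ is strictly increasing, the blocks $[g(n),g(n+1))$ partition the integers $\geq g(1)$, so an infinite sequence $x$ lies in all but finitely many $U_n$ exactly when it lies in all but finitely many $V_j$: membership in $U_n$ for all $n>k$ forces $x\in V_j$ for all $j\geq g(k+1)$, and conversely $x\in V_j$ for all $j\geq J$ forces $x\in U_n$ for all $n$ with $g(n)\geq J$. Hence $\bigcup_{k}\bigcap_{n>k}U_n=\bigcup_{k}\bigcap_{n>k}V_j$ (the tail intersections agree), proving the lemma. The only mildly delicate point is the claim that the finite intersection $U_n$ of uniformly computable open sets is still presented by a uniformly computable sequence of basic balls; this is a standard closure property of computable metric spaces, following because one can computably enumerate, for each pair of basic balls, the basic balls contained in their intersection. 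I would state this as the known fact it is rather than belabor it.
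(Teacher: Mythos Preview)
Your proof is correct and essentially identical to the paper's: both block the original sequence $(V_n)$ into finite intersections $U_i=\bigcap_{n_i\le n<n_{i+1}}V_n$ along a computable increasing sequence $(n_i)$ chosen so that the tail sum $\sum_{n\ge n_i}\mx(\X\setminus V_n)$ is below $2^{-i}$, then verify the measure bound by subadditivity and the equality of Borel--Cantelli sets. The only cosmetic differences are that you spell out the equality of the Borel--Cantelli sets and the closure of computable open sets under finite intersection more carefully than the paper does (and you should note that $g(n):=m(2^{-n})$ need not be strictly increasing as given, but can trivially be made so).
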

\begin{proof}
Let  $(V_n)_{n\geq 1}$ be  a constructive Borel--Cantelli sequence. 
As $\pare{\mx(\X \setminus V_n)}_{n\geq 1}$ is effectively summable, 
an increasing sequence $(n_i)_{i\geq 0}$
 of integers can be computed such that for all~$i\geq 1$,
$ \sum_{n\geq n_i}\mx(\X \setminus V_n ) < 2^{-i}$.
We now gather the $V_n$ by blocks, setting
\[
 U_i = \bigcap_{n_i\leq n <  n_{i+1}}  V_n.
\] 
Then, the sequence $(U_i)_{i\geq 1}$ of open sets is 
is uniformly computable,
$ \mx(X \setminus U_i)<2^{-i} $  and 
\[
\bigcup_{k\geq 1}\bigcap_{n\geq k} V_n =
\bigcup_{i\geq 1} \bigcap_{n\geq n_i }V_n =
\bigcup_{i\geq 1} \bigcap_{j\geq i }U_j.
\]
\end{proof}

The diameter of a set $V$ in a metric space is the supremum of distances between its elements and it is denoted by ${\rm diam}(V)$.
We write $\overline{V}$ for the closure of $V$.

\begin{lemma}[{{\protect\cite[Lemma 4]{Rojas}}}]\label{lemma:point}
Let $\X$ be a computable  metric space with computable measure~$\mu_X$.
 Let $(V_i)_{i\geq 1}$ be a sequence of 
uniformly computable non-empty open sets 
such that for each $i$, $\overline{V}_{i+1}\subseteq V_i$ and ${\rm diam}(V_i)$ converges effectively to $0$ as $i$ goes to infinity. 
Then $\bigcap_{i\geq 1} V_i$ is a singleton containing a computable element.
\end{lemma}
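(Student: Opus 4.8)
The plan is to prove the two assertions separately: first, that $\bigcap_{i\geq 1} V_i$ is a singleton, and then that the unique point it contains is computable.

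For the first assertion I would invoke the completeness of $\X$ (it is Polish) together with Cantor's intersection theorem. The hypothesis $\overline{V}_{i+1}\subseteq V_i$ makes $(\overline{V}_i)_{i\geq 1}$ a decreasing sequence of non-empty closed sets, and ${\rm diam}(\overline{V}_i)={\rm diam}(V_i)$ tends to $0$; hence $\bigcap_{i\geq 1}\overline{V}_i$ consists of exactly one point, which I call $x$. From the interleaved inclusions $\overline{V}_{i+1}\subseteq V_i\subseteq \overline{V}_i$ one gets $\bigcap_{i\geq 1}\overline{V}_i=\bigcap_{i\geq 1}V_i$, so $\bigcap_{i\geq 1}V_i=\{x\}$; in particular the sets $V_i$ are themselves nested, $V_{i+1}\subseteq V_i$.

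For the second assertion I would produce a computable sequence of elements of the dense set $S$ converging effectively to $x$, which is precisely what computability of the point $x$ means. Effective convergence of the diameters yields a computable $g\colon\N\to\N$ with ${\rm diam}(V_{g(n)})\leq 2^{-n}$; after replacing $g(n)$ by $\max(g(n),n)$ and using that the $V_i$ are nested, I may further assume $g$ is non-decreasing with $g(n)\geq n$. Uniform computability of $(V_i)_{i\geq 1}$ lets me enumerate, uniformly in $i$, the basic balls whose union is $V_i$; since $V_{g(n)}$ is non-empty, I may search until a basic ball of $V_{g(n)}$ appears in this enumeration, and, as every basic ball has positive rational radius and hence contains its centre, I take that centre $t_n\in S$, so that $t_n\in V_{g(n)}$. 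Since also $x\in\bigcap_{i\geq 1}V_i\subseteq V_{g(n)}$, we obtain $d(t_n,x)\leq {\rm diam}(V_{g(n)})\leq 2^{-n}$. The assignment $n\mapsto t_n$ is a composition of computable operations, hence computable, and it witnesses that $x$ is a computable element of $\X$.

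The argument is largely routine; the two places that deserve care are: (i) justifying the set equality $\bigcap_{i\geq 1}V_i=\bigcap_{i\geq 1}\overline{V}_i$ from the alternating inclusions; and (ii) ensuring that the extracted approximant $t_n$ genuinely lies in $V_{g(n)}$ and is compared with $x$ itself (which also lies in $V_{g(n)}$) rather than merely with later approximants --- this is exactly the step that upgrades a Cauchy sequence with a computable modulus to an \emph{effective} approximation of the particular limit~$x$. The completeness of $\X$, used in the first assertion, is the only non-constructive ingredient, and it is available because the ambient space is Polish.
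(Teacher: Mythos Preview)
Your proof is correct and follows essentially the same approach as the paper's: pick a computable sequence of points inside the $V_i$ using their uniform computability, use the effective decay of ${\rm diam}(V_i)$ to get an effective rate of convergence, and use completeness together with the inclusions $\overline{V}_{i+1}\subseteq V_i$ to place the limit in $\bigcap_i V_i$. The only cosmetic difference is the order of presentation: you first invoke Cantor's intersection theorem to identify the singleton $\{x\}$ and then exhibit computable approximants to that specific $x$, whereas the paper constructs the Cauchy sequence $(s_i)$ first, takes its limit, and then verifies that this limit lies in $\bigcap_i\overline{V}_i=\bigcap_i V_i$.
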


\begin{proof}
Since each $V_i$ is non-empty there is a computable sequence of elements $(s_i)_{i\geq 1}$, $s_i\in  V_i$. 
This is a Cauchy sequence, which converges by completeness. 
Let $x$ be its limit: it is a computable element as ${\rm diam}(V_i )$ 
converges to $0$ in an effective way. 
Fix some $i$. For all $j\geq i$, $s_j \in V_j \subseteq\overline{ V}_i$, so
$x=\lim_{j\to\infty} s_j \in \overline{V}_i$.
Hence $x\in\bigcap_{i\geq 1} \overline{V}_{i\geq 1} =\bigcap_{i\geq 1} V_i$. 
\end{proof}

\begin{lemma}[{{\cite[Theorem 1]{Rojas}}}]\label{lemma:rojas}
 Let $\X$ be a  computable complete metric space 
  and with  computable  probability measure~$\mx$.
Every constructive Borel--Cantelli set 
    contains a sequence of uniformly computable elements which is dense in the support of~$\mx$.
\end{lemma}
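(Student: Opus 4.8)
The plan is to follow the strategy of~\cite{Rojas}, feeding a carefully constructed nested sequence of basic balls into Lemma~\ref{lemma:point}. First I would invoke Lemma~\ref{lemma:transformed} to assume, without loss of generality, that the given constructive Borel--Cantelli sequence $(U_n)_{n\geq 1}$ satisfies $\mx(\X\setminus U_n)<2^{-n}$, so that its Borel--Cantelli set equals $\bigcup_{k\geq 1}\bigcap_{n>k}U_n$. It then suffices to produce, uniformly in a basic ball $B$ (centred at a point of $S$ with rational radius) for which $\mx(B)>0$ has been confirmed, a single computable point of the Borel--Cantelli set lying inside $B$: the balls with confirmed positive measure form a computably enumerable family, and those among them meeting the support of $\mx$ are dense there, so the associated points will constitute a uniformly computable sequence dense in the support of~$\mx$.

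Fix such a $B$ and choose $k$ with $2^{-k}<\mx(B)$. Put $W=B\cap\bigcap_{n>k}U_n$ and, for each $i\geq 1$, the open set $O_i=B\cap\bigcap_{j=1}^{i}U_{k+j}$ (with $O_0=B$), so that $O_0\supseteq O_1\supseteq\cdots$ and $\bigcap_{i}O_i=W$. Since $\mx(B\setminus W)\leq\sum_{n>k}\mx(\X\setminus U_n)<2^{-k}$, the target set has positive measure, $\mx(W)>\mx(B)-2^{-k}>0$. I would then build basic balls $V_0\supseteq V_1\supseteq\cdots$ by an effective search so that for every $i$: (a) $\overline{V_{i+1}}\subseteq V_i$; (b) $\overline{V_i}\subseteq O_i$; (c) ${\rm diam}(V_i)<2^{-i}$; and (d) the invariant $\mx(V_i\cap W)>0$ holds. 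Conditions (a) and (c) let Lemma~\ref{lemma:point} produce a computable point $x$ with $\bigcap_i V_i=\{x\}$, while (b) forces $x\in V_i\subseteq O_i$ for all $i$, hence $x\in\bigcap_i O_i=W$; thus $x$ lies in $B$ and in $\bigcap_{n>k}U_n$, that is, in the Borel--Cantelli set.

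The heart of the argument is that the search for $V_{i+1}$ always succeeds. Existence is measure-theoretic: by invariant (d) the set $V_i\cap W$ has positive measure, hence contains a point $z$ every neighbourhood of which has positive measure for the restriction $\mx(\,\cdot\cap W)$; since $z\in W\subseteq O_{i+1}$ and $z\in V_i$, a sufficiently small ball $V_{i+1}$ about $z$ can be placed with $\overline{V_{i+1}}$ inside $V_i$, inside $B$, and inside one constituent basic ball of each $U_{k+j}$ for $j\leq i+1$ (so that $\overline{V_{i+1}}\subseteq O_{i+1}$), while still $\mx(V_{i+1}\cap W)>0$, which propagates the invariant. The inclusion tests in (a) and (b) are decidable from the computable distances on $S$ via the triangle inequality applied to centres and radii.

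The step I expect to be the main obstacle is certifying invariant (d) algorithmically, because $V\cap W$ is the trace of a $G_\delta$ set and $\mx(V\cap W)>0$ is not directly lower-approximable. The way around it is a local defect estimate: once $V\subseteq O_{i+1}$, the discarded mass $\mx(V\setminus W)=\mx\!\big(V\cap\bigcup_{n>k+i+1}(\X\setminus U_n)\big)$ draws only on the tail, hence is at most $\sum_{n>k+i+1}2^{-n}=2^{-(k+i+1)}$; so a candidate $V$ satisfies $\mx(V\cap W)>0$ precisely when $\mx(V)$ strictly exceeds this defect, and the latter inequality can be witnessed by lower-approximating $\mx(V)$ and upper-approximating the defect, both of which the computability of $\mx$ supplies. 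A ball $V_{i+1}$ furnished by the existence argument satisfies $\mx(V_{i+1})>\mx(V_{i+1}\setminus W)$ strictly, so the comparison is eventually confirmed and the search halts. Finally, uniformity in $B$ and density of the resulting points in the support of $\mx$ follow by running the same procedure along the enumeration of positive-measure basic balls.
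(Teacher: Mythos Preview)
Your outline is close to the paper's argument, but the algorithmic step has a genuine gap. Your search at stage $i$ looks for a basic ball $V$ of diameter $<2^{-(i+1)}$ with $\overline{V}\subseteq V_i\cap O_{i+1}$ and then tries to confirm $\mx(V)>2^{-(k+i+1)}$, the crude tail bound on $\mx(V\setminus W)$. But there is no reason such a $V$ exists: the defect threshold $2^{-(k+i+1)}$ is tied rigidly to the step index, while the measure of admissible balls may shrink much faster. Concretely, take $\X=\{0,1\}^{\N}$ with the uniform product measure and the metric $d(x,y)=2^{-\mathrm{lcp}(x,y)}$; basic balls are cylinders, and any set of diameter $<2^{-(i+1)}$ lies in a cylinder of length $\geq i+2$, hence has measure $\leq 2^{-(i+2)}$. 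With $B=\X$ and $k=1$ your test asks for $\mx(V)>2^{-(i+2)}$, which no candidate can satisfy, so the search never halts. The sentence ``a candidate $V$ satisfies $\mx(V\cap W)>0$ precisely when $\mx(V)$ strictly exceeds this defect'' is also not a biconditional; more importantly, the existence argument only gives $\mx(V)>\mx(V\setminus W)$, not $\mx(V)$ larger than the \emph{upper bound} $2^{-(k+i+1)}$ you are actually comparing against.

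The fix---and this is exactly what the paper does---is to let the depth into the $U_n$'s be adaptive rather than advancing by one per step. Maintain an increasing sequence $(n_i)$ and the quantitative invariant $\mx(V_i)>2^{-n_i+1}$ together with $V_i\subseteq\bigcap_{n_0\le n<n_i}U_n$; this forces $\mx\big(V_i\cap\bigcap_{n\ge n_i}U_n\big)>0$. Then search simultaneously over a small ball $B'$ and an integer $m>n_i$ for the semi-decidable condition $\mx\big(V_i\cap\bigcap_{n_i\le n<m}U_n\cap B'\big)>2^{-m+1}$, and take $V_{i+1}$ inside that finite intersection with measure $>2^{-m+1}$, setting $n_{i+1}=m$. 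Because $m$ is free, once a suitable $B'$ exists (your existence argument supplies it), one can choose $m$ large enough that the displayed inequality holds and is witnessed by lower approximation; this is what guarantees termination.
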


\begin{proof} 
Let $(U_n)_{n\geq 1}$ 
be a constructive Borel--Cantelli sequence such that $\mx(U_n)>1 -2^{-n}$ (by Lemma~\ref{lemma:transformed} this can always be obtained). 
Let $ B$ be a basic open set. 
In $B$ we construct a computable element which lies in $\bigcup_{n\geq 1} \bigcap_{k\geq n} U_k$, 
in a way that is uniform in~$B$. 

Here is the construction. Let $V_0= B$ and $n_0$  be such that $\mx(B) > 2^{-n_0 +1}$ 
(such an $n_0$ can be effectively found from $B$).
We construct a sequence $(V_i)_{i\geq 1}$ of uniformly 
computable
open sets and a computable increasing sequence $(n_i)_{i\geq 1}$ of positive integers satisfying:

\begin{itemize}\samepage
\item[ (1)] $\mx(V_i) + \mx\Big( \bigcap\limits_{k\geq n_i} U_k \Big) > 1$, 
\item[ (2)] $V_i \subseteq  \bigcap\limits_{n_0\leq k<n_i} U_k$, 
\item[ (3)] ${\rm diam}(V_i) \leq  2^{-i+1}$,
\item[ (4)] $\overline{V}_{i+1}\subseteq  V_i$.
\end{itemize}
The last two conditions assure that $\bigcap_{i\geq 1} V_i$ is a computable element,
 the second condition assures that this element
lies in $\bigcap_{k\geq n_0} U_k$.
Suppose $V_i$ and $n_i$ have been constructed.

 By the first condition, 
\[\mx\Big(V_i \cap \bigcap_{k\geq n_i} U_k\Big) > 0,\]
so there exists a basic open set $B' $ of radius $2^{-i-1}$  such that 
\[
\mx\Big(V_i \cap\bigcap_{k\geq n_i} U_k \cap  B'\Big) > 0.
\] 
Then, there is $m > n_i$ such that  
\[
\mx\Big(V_i \cap\bigcap_{k\geq n_i} U_k \cap  B'\Big)  > 2^{-m+1},
\]
and hence, 
\[
\mx\Big(V_i \cap \bigcap_{n_i\leq k<m} U_k \cap B'\Big) > 2^{-m+1}.
\]
this  inequality  can be semi-decided, such an $m$ and a $B'$ can be effectively found. 
For $ V_{i+1}$ , take any finite union of basic open sets whose closure is contained in 
\[
V_i \cap\bigcap_{n_i\leq k<m }U_k \cap B'
\]
and whose measure is greater than $2^{-m+1}$ . 
Put $n_{i+1} = m$. 
Conditions 2, 3 and 4 directly follow from the construction,
condition 1 follows from 
\[
\mx\Big(V_{i+1}\Big) > 2^{-m+1} > 1-\mx\Big(\bigcap_{k\geq m}U_k\Big).\]
\end{proof}

Tail estimates quantify the rate  of decrease of probabilities away from the central part of the distribution. 
As a corollary of the proof of Theorem~\ref{thm:weiss} we obtain the following result.
It considers the space $\Omega^\N$, 
the measures $\mu^\N$, $\mu^k$, $\mu^\N\times \mu^k$, for every $k\geq 1$,
and the integer-valued random measures on $\R^+$,
$M_k^{x}=M_k^{x}(\omega)$ just on sets $S=(0,\lambda]$, for $\lambda\in\R^+$.


\begin{lemma}[Tail Bound] \label{lemma:tail}
Let $b\geq 2$ be the number of symbols of alphabet $\Omega$,
$i$ a non-negative integer and $\lambda \in \R^+$. Then, for every $k \geq k_0(\lambda) = \max\set{24, 2\log_2(\lambda+1)}$ 
we have
\[
\mu^{\N} \pare{x \in \Omega^{\N} : \abs{\mu^k\pare{  M_k^{x}((0,\lambda]) = i } - \frac{e^{-\lambda} \lambda^i}{i!} }
> 2/k } \leq \exp\pare{\frac{-2 b^k}{\lambda  k^4}}.
\]  
\end{lemma}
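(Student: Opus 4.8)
The plan is to read this off as an effective, quantitative version of the proof of Theorem~\ref{thm:weiss}, specialised to the single interval $S=(0,\lambda]$. Fix $i$, write $f_k(x)=\mu^k\pare{M_k^{x}((0,\lambda])=i}$, and recall from the proof of Theorem~\ref{thm:weiss} the identity $\E\corch{f_k}=\pare{\mu^{\N}\times\mu^k}\pare{M_k((0,\lambda])=i}$ in the notation of Lemma~\ref{lemma:annealed}. I would split the deviation of $f_k(x)$ from $e^{-\lambda}\lambda^{i}/i!$ into an \emph{annealed} part and a \emph{concentration} part: first establish the annealed estimate $\abs{\E\corch{f_k}-e^{-\lambda}\lambda^{i}/i!}\leq 1/k$ for all $k\geq k_0(\lambda)$; then, by the triangle inequality, $\set{x:\abs{f_k(x)-e^{-\lambda}\lambda^{i}/i!}>2/k}\subseteq\set{x:\abs{f_k(x)-\E\corch{f_k}}>1/k}$, so it remains to bound the $\mu^{\N}$-probability of the latter. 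Both estimates will be uniform in $i$, which is why the bound in the statement is $i$-free.

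For the annealed estimate I would rerun the computation inside the proof of Lemma~\ref{lemma:annealed}, this time tracking constants. With index set $J=\N\cap b^{k}(0,\lambda]=\set{1,\dots,\floor{\lambda b^{k}}}$, with $\E\corch{\I_{j}}=b^{-k}$ and $\E\corch{\I_{i}\I_{j}}=b^{-2k}$, and with dependency graph $L$ whose edges join $i,j$ with $0<\abs{i-j}<k$ (so every vertex has at most $2(k-1)$ neighbours), Proposition~\ref{prop:dtv} gives
\[
d_{TV}\pare{M_k((0,\lambda]),\,\Po{\floor{\lambda b^{k}}b^{-k}}}\ =\ {\rm O}(k\,b^{-k}),
\]
because the prefactor $\min\set{1,(\floor{\lambda b^{k}}b^{-k})^{-1}}$ of Proposition~\ref{prop:dtv} cancels against the factor $\floor{\lambda b^{k}}b^{-k}$ carried by both sums; in particular this bound does not depend on $\lambda$. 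Combining it with $d_{TV}\pare{\Po{\floor{\lambda b^{k}}b^{-k}},\Po{\lambda}}\leq\lambda-\floor{\lambda b^{k}}b^{-k}<b^{-k}$ gives $\abs{\E\corch{f_k}-e^{-\lambda}\lambda^{i}/i!}={\rm O}(k\,b^{-k})$, which is at most $1/k$ as soon as $k^{2}b^{-k}$ is small enough; the hypothesis $k\geq 24$ is comfortably sufficient, and this part does not use the $2\log_{2}(\lambda+1)$ term of $k_0(\lambda)$.

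For the concentration part I would invoke McDiarmid's inequality (Proposition~\ref{prop:McD}) exactly as in the proof of Theorem~\ref{thm:weiss}: $f_k$ depends only on the $N=\floor{\lambda b^{k}}+k-1$ coordinates $x_1,\dots,x_N$, and a one-coordinate change of $x$ alters at most $k$ of the windows counted by $M_k^{x}$, hence changes $f_k$ by at most $c=k\,b^{-k}$. Taking $t=1/k$ gives
\[
\mu^{\N}\pare{x:\abs{f_k(x)-\E\corch{f_k}}>1/k}\ \leq\ 2\exp\pare{-\frac{2\,b^{2k}}{k^{4}N}}.
\]
It is convenient to first dispose of the degenerate range $\lambda b^{k}\leq k$: there $M_k^{x}((0,\lambda])$ is nonzero on a set of $\omega$ of $\mu^k$-measure at most $k\,b^{-k}$, so $f_k(x)$ already lies within $k\,b^{-k}<2/k$ of $e^{-\lambda}\lambda^{i}/i!$ for \emph{every} $x$, the bad event is empty, and there is nothing to prove. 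In the remaining range $\lambda b^{k}>k$ one has $N\leq(\lambda+1)b^{k}$ (using $k\leq b^{k}$), and a short computation using $k\geq k_0(\lambda)$ then turns the right-hand side above into the asserted bound $\exp\pare{-2b^{k}/(\lambda k^{4})}$.

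The argument is thus a transcription of the proof of Theorem~\ref{thm:weiss} with the $k\to\infty$ limits replaced by explicit inequalities, so its conceptual content is minimal. The delicate point — and the reason $k_0(\lambda)$ has the precise form $\max\set{24,\,2\log_{2}(\lambda+1)}$ — is to make the two error budgets fit at once: the annealed error must be brought below $1/k$, which needs $k$ large in absolute terms (the constant $24$), while in McDiarmid's estimate the additive ${\rm O}(1)$ term coming from the alignment of $b^{k}S$ with the integers, together with the $k-1$ extra coordinates needed to fit $\omega$, must be negligible against $\lambda b^{k}$, which for large $\lambda$ forces $k$ to be at least of order $\log\lambda$ (the $2\log_{2}(\lambda+1)$ term). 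I expect this final reconciliation of constants to be the only real obstacle; everything else is routine.
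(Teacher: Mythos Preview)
Your proposal is essentially the paper's own argument: split via the triangle inequality into an annealed estimate controlled by Proposition~\ref{prop:dtv} (as in Lemma~\ref{lemma:annealed}) and a concentration estimate via McDiarmid (Proposition~\ref{prop:McD}), each contributing $1/k$.

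The one substantive variation is in the annealed step. The paper bounds $d_{TV}\pare{M_k((0,\lambda]),\Po{\lambda}}$ directly by $(\lambda+1)\,5k\,b^{-k}$, simply upper-bounding the prefactor $\min\{1,\lambda^{-1}\}$ by $1$; it is \emph{this} bound that forces $k\geq k_0(\lambda)$, and the McDiarmid step is then asserted to give $2\exp\pare{-2b^{k}/(\lambda k^{4})}$ without further ado. You instead exploit the prefactor $\min\{1,\ell^{-1}\}$ against the parameter $\ell=\floor{\lambda b^{k}}b^{-k}$ to get an annealed bound ${\rm O}(k\,b^{-k})$ that is \emph{uniform in $\lambda$}, and then locate the $\lambda$-dependence of $k_0(\lambda)$ in the McDiarmid step instead. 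Your route is marginally sharper on the annealed side; the paper's route avoids your case split on $\lambda b^{k}\lessgtr k$. Both arguments are equally loose about the exact constants in the final passage from $2\exp\pare{-2b^{2k}/(k^{4}N)}$ to the stated bound $\exp\pare{-2b^{k}/(\lambda k^{4})}$, so your closing caveat that ``this final reconciliation of constants'' is the only real obstacle is apt and matches the level of detail in the paper.
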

\begin{proof}
Let $\Po{\lambda}$  be  a Poisson random variable with mean $\lambda$.
From the proof of Lemma~\ref{lemma:annealed},
\[
d_{TV} \pare{ M_k((0, \lambda]), \Po{\lambda} } \leq (\lambda+1) b^{-k} 5k ,
\]
which is less than
$1/k$ provided $k\geq k_0(\lambda) = \max \set{ 24, 2 \log (\lambda + 1) }.$ 
\\
This implies, for every~$i\geq 0$,
\[ \abs{ 
\mu^{\N}\times \mu^k\Big(M_k((0,\lambda]) = i\Big)  - \prob\Big(  \Po{\lambda}  = i \Big)  } < 1/k.
\]
Using Proposition~\ref{prop:McD} for the functions $f_k$ introduced in the proof of  Theorem~\ref{thm:weiss}, we know that for every $k\geq 1$ and $i\geq 0$,
\[
\mu^{\N} \pare{ x  \in \Omega^{\N}:
        \abs{
\mu^{k}\Big(M_k^{x}((0,\lambda])  = i\Big) - 
\mu^\N\times\mu^{k}\Big(M_k((0,\lambda]) = i\Big) 
        }  > 1/k
    }
    \leq
   2 \exp\pare{\frac{-2 b^k}{\lambda  k^4}}.
\]
Given that
$\prob\pare{\Po{\lambda}=i}={e^{-\lambda} \lambda^i }/{ i!},$
combining the two inequalities above
we obtain the wanted result.
%
%
%
%
%
%
\end{proof}
\medskip

\begin{proof}[Proofs of Theorems~\ref{thm:computable} and~\ref{thm:random}]
Consider the  topology
generated by the countable family of basic open (and closed)
sets 
$\{ \omega z: z\in\Omega^{\mathbb N}\}$
where $\omega$ varies over $\Omega^{<\mathbb N}$.
For each integer $k\geq 1$, define the open sets
\[
O_k = \bigcup_{\lambda \in L_k} \bigcup_{i \in J_k} Bad(\lambda, k, i)
\]
where
\begin{align*}
Bad(\lambda, k, i) &= 
\set{ x \in \Omega^{\mathbb{N}} : \abs{\mu^k\pare{ M_k^{x}((0,\lambda]) = i } - \frac{e^{-\lambda} \lambda^i}{i!}} > 2/k },
\\
L_k &= \set {p/q: q \in \set{1, \ldots , k}, p/q < k},
\\
J_k &= \set{0 ,\ldots , b^k-1}.
\end{align*}
Using  Lemma~\ref{lemma:tail} we  give an upper bound of  $\mu^\N(O_k)$.
For each  $k\geq 24$,
\begin{align*}
\mu^\N(O_k) &= \mu^\N \pare{ \bigcup_{\lambda \in L_k} \bigcup_{i \in J_k} Bad(\lambda, k, i) }\\
 &\leq  \sum_{\lambda \in L_k} \sum_{i=0}^{b^k-1} \mu^\N(Bad(\lambda, k, i)) \\
 &\leq  \sum_{\lambda \in L_k} \sum_{i=0}^{b^k-1} 2\exp \pare{\frac{-2b^k}{\lambda k^4}}\\
 &\leq  \sum_{\lambda \in L_k} \sum_{i=0}^{b^k-1}2 \exp \pare{\frac{-2b^k}{k k^4}}\\
 &= \sum_{\lambda \in L_k} 2b^k \exp \pare{\frac{-2b^k}{k^5}}\\
 &\leq 2 b^k k^3 \exp \pare{\frac{-2b^k}{k^5}}.
\end{align*}
It follows that   $(\mu^\N(O_k))_{k\geq 1}$ is effectively summable.
Notice that for 
 \[
 U_k = \Omega^{\N} \setminus O_k,
 \]
the set $\bigcup_{k\geq 1}\bigcap_{n>k} U_n$ is a Borel--Cantelli set.

 Applying  Lemma~\ref{lemma:rojas}  on the space $\Omega^{\mathbb N}$
 we conclude 
that there is a sequence of uniformly computable elements.
Each of them is $\lambda$-Poisson generic for every rational~$\lambda$.
    To prove that the property holds for all real numbers, observe that for every pair of positive reals  $\lambda,\lambda'$, with~$\lambda < \lambda'$, 
\[ 
M_k^x((0,\lambda'])(\omega) 
- M_k^x((0,\lambda])(\omega) = 
\sum_{j \in \N \cap b^k [\lambda, \lambda')} 
I_j(x,\omega ),
\] 
{ where, since  $x$ is fixed,  $I_j(x,\omega )$ is a function on $\Omega^k$. }
Hence,
\[
d_{TV}(M_k^x((0,\lambda']), M_k^x((0,\lambda]))
\leq \frac{1}{b^k}\#\pare{\N \cap b^k [\lambda, \lambda')  }
= \lambda' - \lambda + {\rm O}(b^{-k}).
\] 
Also observe that  $d_{TV}(\Po{\lambda'}, \Po{\lambda}) \to 0$ as $\lambda \to \lambda'$.
From these two observations and the fact that the rational numbers are a dense subset of the real
numbers we conclude that each  element in $\bigcup_{k\geq 1}\bigcap_{n>k} U_n$ 
is $\lambda$-Poisson
generic for every positive real $\lambda$ and hence, Poisson generic.
 This completes the proof of  Theorem~\ref{thm:computable}.
\medskip

The remaining lines prove Theorem~\ref{thm:random}.
We show that 
all non-Poisson generic elements in~$\Omega^\N$ are not 
Martin-Löf random.
For this we define a Martin-L\"of test $(T_m)_{m\geq 1} $ such that $\bigcap_{m\geq 1} T_m$   contains all the  non Poisson generic elements.
Fix $k_0=24$. Define  $(T_m)_{m\geq 1} $ by 
\[
T_m =\bigcup_{k\geq m +k_0 } O_{k}.
\]
 Clearly $(T_m)_{m\geq 1} $  is a Martin-L\"of test
because it is  a uniformly computable sequence of open sets, 
$\mu^\N(T_m) $ is computably bounded and it  goes to $0$ as $m$ goes to infinity,
\begin{align*}
\mu^\N(T_m) &\leq \sum_{k\geq m+k_0} \mu^\N(O_k)
\leq  \sum_{k\geq m+k_0} 2 b^k k^3 \exp\pare{ \frac{-2b^k}{k^5} }.
\end{align*} 

Now we  prove that for every $m_0$,
$\bigcap_{m\geq m_0}(\Omega^{\mathbb N}\setminus T_m)$ contains only Poisson generic elements. 
 By way of contradiction assume there exists a value  $m_0$ such that   $x\in\bigcap_{m\geq m_0}(\Omega^\N \setminus T_m) $
but~$x$ is not Poisson generic. Using the same argument as above,  $x$ is not $\lambda$-Poisson generic
 for some positive
rational~$\lambda$.
Then, 
there is a non-negative integer~$i$,
a positive real~$\varepsilon$ and  infinitely many values~$k$ such that
\[
\abs{ 
\mu^k\pare{M_k^{x}((0,\lambda]) = i }  - \frac{e^{-\lambda} \lambda^{i}}{i!} } > \varepsilon.
\]
Fix $k_1\geq m_0$ large enough such that 
$\lambda\in L_{k_1}$, 
$i\in J_{k_1}$ 
and $\varepsilon>2/k_1$.
Since 
$(L_k)_{k \geq 1}$ and $(J_{k})_{k \geq 1}$ are increasing and $2/k$ is decreasing in~$k$,
this is still valid for every $k \geq k_1$.
Since we assumed $x\in\bigcap_{m\geq m_0}(\Omega^{\mathbb N}\setminus T_m)$
then, for every $k\geq k_1$ and for every  $i\in J_{k}$ we have
\[
\abs{ 
\mu^k\pare{M_k^{x}((0,\lambda]) = i } - \frac{e^{-\lambda} \lambda^i}{i!} } < 2/k.
\]
Since $i\in J_k$ and  $2/k< \varepsilon$  we reached a contradiction.
Therefore, all  elements  in  $\bigcap_{m\geq 1} (\Omega^{\mathbb N}\setminus T_m)$ are $\lambda$-Poisson generic for every positive rational~$\lambda$, hence Poisson generic.

Finally, consider any     $x\in\Omega^\N$  that is not  Poisson generic. Then, 
$x$ belongs to no set
\[
W_n=\bigcap_{m\geq n} (\Omega^{\mathbb N}\setminus T_m),
\]
for any~$n$.
Thus,  $x$ belongs,  for each $n$, to the  complement set
$(\Omega^{\mathbb N}\setminus W_n)$.
Then,
\[
x\in \bigcap_{n\geq 1} (\Omega^{\mathbb N}\setminus W_n)
=
\bigcap_{n\geq 1}\Big(  \bigcup_{m\geq n} T_m\Big) = \bigcap_{n\geq 1} T_{n}.
\]
Hence, $x$ is not Martin-L\"of random. This completes the proof of Theorem~\ref{thm:random}.
\end{proof}
\bigskip

\noindent{\bf Acknowledgements.}
We thank Benjamin Weiss for allowing us to transcribe his proof of 
Theorem~\ref{thm:weiss} and for his lively comments.
We also thank Zeev Rudnick for having introduced  us in the world of the Poisson generic sequences.
We are grateful to Inés Armendariz and to an anonymous referee for multiple comments that helped us to improve the presentation.

\bibliographystyle{plain}
\bibliography{revised-abm}

\noindent
Nicol\'as Álvarez \\
  ICC CONICET Argentina -  {\tt  nico.alvarez@gmail.com}
\medskip

\noindent
Ver\'onica Becher \\
 Departamento de  Computaci\'on, Facultad de Ciencias Exactas y Naturales \& ICC  \\
 Universidad de Buenos Aires \&  CONICET  Argentina-  {\tt  vbecher@dc.uba.ar}
\medskip

\noindent
Martín Mereb \\
 Departamento de Matemática, Facultad de Ciencias Exactas y Naturales \& IMAS \\
 Universidad de Buenos Aires \&  CONICET Argentina-  {\tt  mmereb@gmail.com}
\end{document}